\newtheorem{lemma}{Lemma}
\newtheorem{theorem}{Theorem}
\newtheorem{proposition}{Proposition}
\theoremstyle{definition}}
\theoremstyle{definition}}
\theoremstyle{definition}\newtheorem*{remark}{Remark}}
\theoremstyle{definition}}
\newtheorem{assumption}{Assumption}
\numberwithin{lemma}{section}
\numberwithin{proposition}{section}
\numberwithin{equation}{section}
\newcommand{\abs}[1]{\left\vert#1\right\vert}
\newcommand{\R}{\mathbf{R}}
\newcommand{\rmax}{r_{\max}}
\newcommand{\paren}[1]{\left(#1\right)}
\newcommand{\bracket}[1]{\left[#1\right]}
\newcommand{\set}[1]{\left\{#1\right\}}
\newcommand{\calN}{\mathcal{N}}
\newcommand{\N}{\mathbf{N}}
\newcommand{\norm}[1]{\left\Vert#1\right\Vert}
\renewcommand{\Re}{\mathrm{Re}}
\DeclareMathOperator{\rad}{rad}
\DeclareMathOperator{\bigo}{O}
\DeclareMathOperator{\arcsinh}{arcsinh}
\DeclareMathOperator{\mat}{Mat}
\DeclareMathOperator{\dir}{Dir}
\DeclareMathOperator{\rob}{Rob}
\newcommand{\inner}[2]{\left\langle #1,#2\right\rangle}
\title[Schr\"odinger-Poisson]{Nonlinear Bound States in a Sch\"odinger-Poisson System with External Potential}
\author{J.L. Marzuola}\thanks{ J.L.M. was support in part by NSF
  Applied Math Grant DMS-1312874 and NSF CAREER Grant DMS-1352353.  He
  wishes to thanks the Schr\"odinger Institute and the Mathematical
  Sciences Research Institute for graciously hosting him during part
  of this work.  He also thanks Hugh Bray for suggesting this problem
  in the first place as well as Richard Koll\'ar and Jianfeng Lu for
  helpful discussions.  The authors thank the anonymous referees for
  numerous helpful comments and generally improving the quality of the
  result with their suggestions.}
\address{University of North Carolina at Chapel Hill}
\email{marzuola@email.unc.edu}
\author{S.G. Raynor}\thanks{S. Raynor was supported by the Simons
  Foundation.  She would also like to thank the University of North
  Carolina at Chapel Hill for hosting her during part of this work.}
\address{Wake Forest University}
\email{raynorsg@wfu.edu}
\author{G. Simpson}\thanks{G. Simpson was supported by the US National
  Science Foundation grant DMS-1409018. }
\address{Drexel University}
\email{simpson@math.dexel.edu}
\begin{document}

\begin{abstract}
  We consider radial solutions to the Schr\"odinger-Poisson system in
  three dimensions with an external smooth potential with Coulomb-like
  decay.  Such a system can be viewed as a model for the interaction
  of dark matter with a bright matter background in the
  non-relativistic limit.  We find that there are infinitely many
  critical points of the Hamiltonian, subject to fixed mass, and that
  these bifurcate from solutions to the associated linear problem at
  zero mass.  As a result, each branch has a different topological
  character defined by the number of zeros of the radial states.  We
  construct numerical approximations to these nonlinear states along
  the first several branches.  The solution branches can be continued,
  numerically, to large mass values, where they become asymptotic,
  under a rescaling, to those of the Schr\"odinger-Poisson problem
  with no external potential.  Our numerical computations indicate
  that the ground state is orbitally stable, while the excited states
  are linearly unstable for sufficiently large mass.
\end{abstract}

\maketitle

\section{Introduction}
\label{s:intro}

We consider the existence and stability of stationary solutions to the
radial, focusing nonlinear Schr\"odinger-Poisson equation in $\R^3$
with focusing, Coulomb-like potential\footnote{By ``Coulomb like'', we
  mean that it decays like $|x|^{-1}$ as $r \to \infty$.},
\begin{equation}\label{SchPoi}
  i \partial_t \phi  - \Delta \phi +V(|x| ) \phi -{\mathcal{N}}(\phi) = 0.
\end{equation}
Under the ansatz $\phi(x,t) = e^{-i E t}u(x)$, the stationary
solution, $u$, satisfies a nonlinear elliptic equation with nonlocal
nonlinearity and long range potential function.  The time independent
problem takes the form:
\begin{equation}\label{NLS}
  - \Delta u +V(|x| ) u -{\mathcal{N}}(u) = -E u.
\end{equation}
Throughout, the external potential $V$ will be the solution of
\begin{equation}
  \label{pot}
  \Delta V = \rho (|x|)
\end{equation}
for $\rho > 0$, $\| \rho \|_{L^1} = Z$.  The techniques developed
here, analytically and numerically, can be modified to include the
case $\rho = Z \delta(x)$, corresponding to the classic Coulomb potential,
$-Z/|x|$.  Nonlinear Schr\"odinger equations, like \eqref{SchPoi},
with other external attractive, singular, potentials have also been
considered in quantum mechanical applications.  For instance, a $ -r^{-2}$ potential, together with a harmonic trapping potential
and a repulsive nonlinearity, was examined under spherical and cylindrical
symmetry in 
\cite{sakaguchi2011suppression1,sakaguchi2011suppression2}.  While similar methods to those
applied here could be implemented in such cases, due to our interest
in the application to general relativity we have focused on
$r^{-1}$-type potentials.  We will take an attractive nonlinearity
${\mathcal{N}}(u)$ to be of Schr\"odinger-Poisson type, given by
\begin{equation}
  \label{SchPoiNL}
  \calN (u) = ( |x|^{-1} * |u|^2 ) u = [(-\Delta)^{-1}|u|^2] u,
\end{equation}
where $( |x|^{-1} * |u|^2 )$ corresponds to the convolution of $|u|^2$
with the standard Green's function in $\R^3$.  This nonlinearity is
sub-critical with respect to the $L^2$ scaling and nonlocal.  In this
case, \eqref{NLS} is the Euler-Lagrange equation for the energy
functional
\begin{equation}\label{NLS-Lag}
  \mathcal{H} (u) = \int | \nabla u |^2 dx + \int V |u|^2 dx - \frac12  \int  \frac{ |u|^2 (x) |u|^2 (y)}{|x-y| } dx dy,
\end{equation}
subject to fixed mass
\begin{equation}\label{Mass}
  \mathcal{M} (u) =  \int | u|^2 dx,
\end{equation}
and $E$ plays the role of the Lagrange multiplier.

The nonlocal nonlinearity, \eqref{SchPoiNL}, arises in the
non-relativistic limit of an Einstein-Klein-Gordon system, which can
serve as a model for Dark Matter, \cite{Bray}. Following an idea of
Bray, this potential allows us to model the trapping of Dark Matter by
``bright matter.''  The potential is itself a solution to
$\Delta V = \rho$ for mass density $\rho$.  In a general relativistic
model proposed by Bray and others, stable excited states of the
Einstein-Klein-Gordon system including a background matter potential
representing the ``bright matter" have been sought, \cite{BG,BP}.
This is modeled by adding a mass density to the Einstein-Klein-Gordon
equations, which plays the role of the potential, $V$, in the
Schr\"odinger-Poisson model studied here.

Many of our results are applicable to other potentials and
nonlinearities, but we focus on Coulomb and Schr\"odinger-Poisson.
For instance, we might also study both super-critical and sub-critical
local nonlinearities, including the classical cubic nonlinearity,
\begin{equation}
  \label{cubicNL}
  \calN (u) = |u|^2 u,
\end{equation}
and a nonlinearity popular in density functional theory, representing
a Dirac exchange term, \begin{equation}
  \label{diracNL}
  \calN (u) = |u|^{\frac23} u.
\end{equation}
See \cite{anantharaman2009existence} for how such an attractive
nonlinearity arises in the LDA functional in density functional
theory.  In all cases we consider, the nonlinearities are assumed to
be focusing and the external potentials are assumed to be attractive.
The most significant differences amongst the cases will appear in the
large $E$ asymptotics.  Additional care in the analysis will also be
required for potentials which are not smooth, as in the pure Coulomb
potential for instance, along with non-smooth nonlinearities, such as
\eqref{diracNL}.

Here, we prove the existence of branches of radially symmetric
solutions to our system.  Each branch, as a function of the mass,
corresponds to solutions with a particular number of zero crossings in
the radial coordinate, and this number is invariant along the branch.
At mass zero, the branches terminate in the eigenstates associated
with the linear operator $-\Delta + V$.  Continuing the branches
requires a spectral assumption.  Specifically, we assume that
\begin{assumption}
  \label{spec_assumption}
  The kernel of the linearization of \eqref{NLS}, about a given
  solution, restricted to radial functions, is trivial.
\end{assumption}
\noindent In our numerical computations, we found that the discretized
operator did not have a kernel.

We are able to show that, at the very least, none of the branches
intersect.  In addition, we explore the high energy limit
($E \to \infty$), showing that these branches, should they continue
all the way to $E \to \infty$, connect to solutions of \eqref{NLS}
with $V = 0$.  We also examine the stability of the bound states, both
through a numerical examination of the spectrum, and through time
dependent simulations.  We find the ground state to be orbitally
stable, while the excited states, of sufficiently large $E$, are
linearly unstable.

Our work is organized as follows.  In Section \ref{s:linear}, we
review properties of the spectrum with Coulomb potentials and
establish the properties needed for a bifurcation analysis.  Next, in
Section \ref{s:existence}, we use a Lyapunov-Schmidt reduction to
construct a branch of bound states emanating from each linear
eigenvalue involving projection onto all the other discrete spectral
modes.  We then discuss how such branches behave as the nonlinear
eigenvalue $E \to \infty$, in Section \ref{sec:asymp}.  In Section
\ref{s:stability}, we review orbital stability and relate it to our
problem.  Then, in Section \ref{s:numerics}, we describe the numerical
methods we have used and present the results from various
time-dependent simulations and spectral stability calculations.  In
Section \ref{s:disc}, we discuss our calculations and simulations,
along with open problems.  Some additional bounds on unstable
eigenvalues are given in Appendix \ref{a:bds}

\section{Review of Linear Spectral Theory}
\label{s:linear}

In this section, we review some key results from linear spectral
theory for operators of the form of $H = -\Delta + V$.

\subsection{The Hydrogen Atom}
Recall that $V = -Z/|x|$ corresponds to the well known model of the
hydrogen atom, for which the eigenvalues and eigenfunctions are
entirely explicit; see \cite{gustafson2011mathematical}.  The
solutions to
\begin{equation}
  \label{eqn:Zcoulomb}
  - \Delta \psi_E - \frac{Z}{|x|} \psi_E=-E \psi_E
\end{equation}
can be obtained by power series methods, with eigenvalues
\begin{equation}
  \label{e:coulomb_evals}
  E = E_n\equiv  \frac{Z^2}{2n^2}, \ \ n \in \N,
\end{equation}
and corresponding radial eigenfunctions
\begin{equation}
  \label{e:coulomb_evecs}
  \psi_{n} (x) = e^{-\frac{Z |x|}{n}} P_{n-1} (\tfrac{Z |x|}{n}).
\end{equation}
Here, the $P_n (s) $ are the Laguerre polynomials $L_n^1 (s)$.  Each
$P_n$ has precisely $n$ positive zeros, hence $\psi_{n}$ has the
corresponding number of roots.

\subsection{Potentials with Coulomb like Decay at Infinity}

We will use variational methods to obtain the existence of infinitely
many radial excited states, with a sequence of eigenvalues approaching
zero from below.  For more on this type of analysis, see
\cite{RS4,Lenzmann}.
\begin{proposition}
  Assume that $V(x)$ is spherically symmetric and in $C^\infty$, and
  assume that $\exists Z \in \R^+$ such that
  \begin{equation}
    \label{e:coulomb_decay}
    \lim_{|x|\to \infty}|x|V(x)=-Z.
  \end{equation}
  Then $L = - \Delta + V(x)$, interpreted as a linear operator with
  form domain $H^1_{\rad}(\R^3)$, has an increasing infinite sequence
  of negative eigenvalues that approaches zero from below.
\end{proposition}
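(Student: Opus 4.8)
The plan is to combine the Courant--Fischer min-max principle with an identification of the essential spectrum. Write the quadratic form associated with $L$ as
\[
Q(u) = \int \abs{\nabla u}^2\, dx + \int V \abs{u}^2\, dx, \qquad u \in H^1_{\rad}(\R^3).
\]
Since $V \in C^\infty$ and \eqref{e:coulomb_decay} forces $V(x) \to 0$ as $\abs{x} \to \infty$, the potential is bounded on all of $\R^3$; hence $Q$ is closed and bounded below, and $L$ is the associated self-adjoint operator, semibounded from below. The first step is to pin down the essential spectrum: because $V$ is bounded and vanishes at infinity, multiplication by $V$ is a relatively compact perturbation of $-\Delta$ (the operator $V(-\Delta+1)^{-1}$ is compact), so Weyl's theorem gives $\sigma_{ess}(L) = \sigma_{ess}(-\Delta) = [0,\infty)$, and the same conclusion holds in the radial sector. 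Thus any spectrum strictly below $0$ consists of isolated eigenvalues of finite multiplicity, and it suffices to exhibit infinitely many of them.

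The heart of the argument is to produce, for every $n$, an $n$-dimensional subspace of $H^1_{\rad}(\R^3)$ on which $Q$ is negative. I would fix a smooth profile $g$ supported in $[1,2]$ and, for $R$ large, set $u_R(r) = g(r/R)$, a radial function supported in the annulus $R \le \abs{x} \le 2R$. A direct scaling computation gives
\[
\int \abs{\nabla u_R}^2\, dx = C_1 R, \qquad C_1 = 4\pi \int_1^2 \abs{g'(s)}^2 s^2\, ds,
\]
while \eqref{e:coulomb_decay} implies $V(r) \le -Z/(2r)$ for all sufficiently large $r$, whence
\[
\int V \abs{u_R}^2\, dx \le -2\pi Z \int_R^{2R} \frac{\abs{g(r/R)}^2}{r}\, r^2\, dr = -C_2 R^2, \qquad C_2 = 2\pi Z \int_1^2 \abs{g(s)}^2 s\, ds.
\]
Therefore $Q(u_R) \le C_1 R - C_2 R^2 < 0$ once $R$ is large enough: the Coulomb tail, which scales like $R^{-1}$ against the $L^2$ mass, dominates the kinetic energy, which scales like $R^{-2}$. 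Choosing a sequence of radii $R_1 < R_2 < \cdots$ with $R_{k+1} > 2R_k$ makes the supports of the $u_{R_k}$ pairwise disjoint, so they are mutually orthogonal in both the $L^2$ and the form inner products, and $Q$ is negative on each. For any $n$ their span is then an $n$-dimensional subspace on which $Q < 0$.

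Finally I would feed this into min-max. Writing $\mu_n$ for the $n$-th min-max value of $Q$, the subspaces just constructed show $\mu_n < 0 = \inf \sigma_{ess}(L)$ for every $n$. Hence each $\mu_n$ is a genuine eigenvalue of $L$ lying below the essential spectrum; the sequence $(\mu_n)$ is nondecreasing and bounded above by $0$, so it converges, and since there are infinitely many eigenvalues below the essential spectrum they can only accumulate at its infimum, giving $\mu_n \uparrow 0$. This is precisely an increasing infinite sequence of negative eigenvalues approaching zero from below.

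I expect the main obstacle to be the trial-function estimate together with the essential-spectrum identification. The scaling computation is the crux: one must invoke the sharp form of the hypothesis to secure the pointwise bound $V(r) \le -Z/(2r)$ for large $r$ and then verify the decisive $R^2$-versus-$R$ competition. The relative-compactness step underlying Weyl's theorem is standard but requires a little care when passing to the radial reduction on $L^2((0,\infty), r^2\, dr)$.
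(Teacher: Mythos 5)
Your proposal is correct and follows essentially the same route as the paper: identify $\sigma_{ess}(L)=[0,\infty)$ via a relative-compactness/Weyl argument (the paper uses the Rollnik-class form-compactness criterion, you use compactness of $V(-\Delta+1)^{-1}$, which is an equivalent standard route for bounded $V$ vanishing at infinity), then apply min-max with radial trial functions supported in disjoint annuli $R\le |x|\le 2R$, where the Coulomb tail contributes at order $R^{-1}$ against the kinetic term's $R^{-2}$ to make the form negative. The only cosmetic difference is that the paper $L^2$-normalizes its trial functions $\psi_R=R^{-3/2}\psi(\cdot/R)$ while you leave yours unnormalized, which is immaterial since only the sign of the quadratic form on the span matters.
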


\begin{proof}
  Let $L_0 = -\Delta$. We apply the Rayleigh-Ritz technique, as in
  Section XIII.2 of Reed-Simon, to find the claimed infinite set of
  eigenvalues.

  Define
  \begin{equation}
    \label{e:mu}
    \mu_n(L) :=\sup_{\substack{S\subset H,\\ \mathrm{dim}(S)=n-1}}
    \inf_{\substack{\psi \in S^\perp,\\ \|\psi\|= 1}} \langle \psi, L\psi
    \rangle.
  \end{equation}
  Note that $L$ is bounded from below because $V$ is bounded and $L_0$
  is a nonnegative operator.  Therefore, it follows that if $P$ is a
  projection onto any $n$-dimensional subspace of $H$, then $\mu_n$ is
  bounded from above by the $n$-th eigenvalue of $PLP$ on $S$.
  (Theorem XIII.3 of \cite{RS4}).  \medskip Define the Rollnik class
  of potentials, $ \mathcal{R}$, by
  \begin{equation}
    \label{e:rollnik}
    \mathcal{R}\equiv \left\{V:\R^3\to\R \mid\displaystyle \int
      \frac{|V(x)||V(y)|}{|x-y|^2}dxdy<\infty\right\}.
  \end{equation}
  Also, define $(L^\infty)_\epsilon$ to be the set of functions with
  $L^\infty$ norm bounded by $\epsilon$.  As in Example $7$ on page
  $118$ of \cite{RS4}, if $V \in \mathcal{R}+(L^\infty)_\epsilon$,
  $\forall \epsilon > 0$, then $-\Delta + V$ is a form-compact
  perturbation of $L_0$ and therefore shares the same essential
  spectrum.  Coulomb potentials belong to the Rollnik class when cut
  off on any compact set.  Therefore, since the remainder is an
  arbitrarily small bounded perturbation, the Coulomb potential is in
  the class $\mathcal{R}+(L^\infty)_\epsilon$.  By standard Fourier
  analysis and a spectral perturbation argument, the essential
  spectrum of $L_0$ can be shown to be $[0, \infty)$.  In particular,
  zero is the bottom of the essential spectrum.

  By Theorem XIII.1 of \cite{RS4}, $\forall n \in \N$, $\mu_n$ is
  either the $n$-th eigenvalue of $L$ or $\mu_n=0$, the base of the
  essential spectrum.  Provided we can show that
  $\mu_n < 0 \ \forall n \in \N$, we can conclude that $L$ has
  infinitely many eigenvalues.  To do this, for each $n$ we will find
  appropriate $n$-dimensional spaces $H_n$ on which all eigenvalues
  are negative, and then apply the above described upper bound.

  This argument appears in the proof of Theorem XIII.6 of \cite{RS4}.
  Choose $\psi \in C_0^\infty(\R^3)$ satisfying $\psi \geq 0$ and
  supp$(\psi)\subset \{x: 1 < |x| < 2\}$, $\psi$ is radially symmetric
  and $\|\psi\|_{L^2} = 1$.  Define
  $\psi_R(x)=R^{-\frac32}\psi(\frac{x}{R}).$ Then
  supp$(\psi_R)\subset\{x:R<|x|<2R\}$ and $\psi_R$ satisfies the other
  conditions above.  For $R$ sufficiently large,
  \begin{align*}
    \langle \psi_R, L\psi_R \rangle & = \langle \psi_R, -\Delta\psi_R\rangle - \langle \psi_R, V(x)\psi_R \rangle \leq  \langle \psi_R, -\Delta\psi_R\rangle - \langle k\psi_R, {|x|^{-1}} \psi_R \rangle\\
                                    & \quad \leq R^{-2}\langle \psi, -\Delta\psi \rangle - kR^{-1} \langle \psi,\psi\rangle < 0.
  \end{align*}
  Fix $R_0$ sufficiently large so that this is true whenever $R > R_0$
  Now let $\phi_m=\psi_{2^mR_0}$ for $m=1\ldots n$.  It follows that,
  on $H_n=\mathrm{span}\{\phi_1,\ldots,\phi_n\}$, all eigenvalues of
  $PLP$ are negative, because these functions have disjoint support.
  Hence $L$ has an infinite sequence of negative eigenvalues
  approaching zero from below.
\end{proof}

\subsubsection{Sturm-Liouville Theory}
Let $(-E_n, \psi_n)$ be the eigenpairs for $L$ on $H$, ordered so that
$-E_n$ increases as $n$ increases.  We would like to know that, if
$-E_m>-E_n$, then $\psi_m$ has more zero crossings than $\psi_n$.
This requires a Sturm-Liouville-type argument on the radial equation
satisfied by the eigenfunctions. We first need a preliminary lemma
about the decay rate of our eigenfunctions:
\begin{lemma}
  \label{linearrdecay}
  For each $n$, $\psi_n$ has exponential decay as $r$ tends to
  infinity.
\end{lemma}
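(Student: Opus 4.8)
The plan is to reduce the eigenvalue equation to a second-order ODE in the radial variable and then run a comparison (maximum-principle) argument against an exponential, using only that the effective potential is eventually positive. Writing $\psi_n(x) = w_n(r)/r$ with $r = |x|$ (and noting $\psi_n$ may be taken real-valued, since $L$ has real coefficients on the radial form domain), the identity $\Delta(w/r) = w''/r$ turns $L\psi_n = -E_n\psi_n$ into
\begin{equation}
  w_n'' = \paren{V(r) + E_n}\,w_n =: Q(r)\, w_n, \qquad r \in (0,\infty),
\end{equation}
with $w_n(0)=0$, and the $L^2$ normalization of $\psi_n$ gives $\int_0^\infty \abs{w_n(r)}^2\,dr = \tfrac{1}{4\pi}\norm{\psi_n}_{L^2}^2 < \infty$, so $w_n \in L^2(0,\infty)$. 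Since $V$ is smooth and $\psi_n$ is a genuine eigenfunction, elliptic regularity lets us treat this ODE classically. Because $-E_n < 0$ and $V(r) \to 0$ as $r \to \infty$ (a consequence of the Coulomb-like decay $|x|V \to -Z$), I may fix $0 < \beta^2 < E_n$ and then choose $R_0$ so large that $Q(r) \geq \beta^2 > 0$ for all $r \geq R_0$.

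Next I would show that $w_n$ tends to $0$ with exponential control. Set $G = w_n^2 \ge 0$; differentiating twice and using the equation,
\begin{equation}
  G'' = 2(w_n')^2 + 2Q\,w_n^2 \;\geq\; 2\beta^2 G \qquad (r \geq R_0),
\end{equation}
so $G$ is convex on $[R_0,\infty)$ and integrable there. A nonnegative convex function that is integrable on a half-line is eventually nonincreasing (else $G'$ would be bounded below by a positive constant and $G$ would grow linearly, contradicting integrability) and hence decreases to a nonnegative limit, which integrability forces to be $0$; thus $G(r) \to 0$. One can alternatively observe a disconjugacy fact that is useful elsewhere: where $Q>0$, a nontrivial decaying solution cannot cross zero, for just after a crossing $w_n$ would be convex with a fixed sign and grow at least linearly, violating $w_n \in L^2$; so $w_n$ is single-signed beyond $R_0$.

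With $G(R_0)$ finite and $G(r)\to 0$, I would compare $G$ against the explicit supersolution $H(r) = G(R_0)\,e^{-\sqrt{2}\,\beta\,(r - R_0)}$, which satisfies $H'' = 2\beta^2 H$ and $H(R_0)=G(R_0)$. The difference $v = G - H$ obeys $v'' \geq 2\beta^2 v$ on $[R_0,\infty)$ with $v(R_0) = 0$ and $v(r)\to 0$. A positive interior maximum of $v$ is impossible, since there $v'' \le 0$ while $v'' \ge 2\beta^2 v > 0$; hence $v \leq 0$, giving $w_n(r)^2 = G(r) \leq G(R_0)\,e^{-\sqrt{2}\,\beta\,(r-R_0)}$. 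Therefore $\abs{w_n(r)} \le C\,e^{-(\beta/\sqrt{2})\,r}$ and, since $\psi_n = w_n/r$, the eigenfunction $\psi_n$ decays exponentially as $r\to\infty$, as claimed. (Running the comparison on $w_n$ itself rather than on $w_n^2$ sharpens the rate to any value below $\sqrt{E_n}$, but the lemma only asks for some exponential rate.)

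The main obstacle is the long-range Coulomb tail: because $V \sim -Z/r$ is not integrable, the standard asymptotic-integration (Levinson) results that would read off the decay from an $L^1$ perturbation of $w_n'' = E_n w_n$ do not apply directly. The comparison argument sidesteps this, relying only on the sign condition $Q \geq \beta^2 > 0$ for large $r$, which the Coulomb-like decay guarantees; the $-Z/r$ correction affects only the sharp constant in the exponent (and supplies the polynomial prefactor familiar from the hydrogenic case), not the fact of exponential decay. A softer alternative would be to invoke Agmon or Combes--Thomas exponential estimates for $-\Delta + V$ at a spectral parameter below the bottom $0$ of the essential spectrum; I favor the self-contained ODE comparison, as it dovetails with the Sturm--Liouville analysis of the radial equation used in the sequel.
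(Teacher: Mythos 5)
Your argument is correct, and it takes a genuinely different route from the paper. The paper makes the same reduction $\phi = r\psi_n$, $-\phi'' + V\phi = -E_n\phi$, but then appeals to the asymptotic-integration theory of Coddington--Levinson (Chapter 3, Theorem 8.6) for second-order equations with slowly varying coefficients: the equation has a fundamental system with one exponentially growing and one exponentially decaying solution, and the WKB-type conclusion $\phi(r)\exp\bigl(\int_R^r\sqrt{E_n+V(s)}\,ds\bigr)\to c$ pins down the precise decay rate, including the effect of the $-Z/r$ tail (which produces the hydrogenic polynomial prefactor). Your comparison argument replaces that citation with an elementary, self-contained maximum-principle computation on $G = w_n^2$, using only $V+E_n \geq \beta^2 > 0$ for large $r$ together with $w_n \in L^2(0,\infty)$; each step (convexity plus integrability forcing $G\to 0$, then the supersolution comparison) checks out. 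What you give up is sharpness: you obtain an upper bound $|w_n|\lesssim e^{-\beta r/\sqrt{2}}$ for any $\beta<\sqrt{E_n}$ rather than the two-sided asymptotic the paper extracts, and the paper's form is the one it reuses when normalizing eigenfunctions at infinity for the Sturm--Liouville comparison. What you gain is robustness and transparency -- your proof needs no regularity of $V'$ beyond $V\to 0$ and works verbatim for any potential vanishing at infinity, whereas the Coddington--Levinson theorem requires control on the variation of the coefficient (satisfied here since $V$ is smooth and Coulomb-like, but an extra hypothesis nonetheless). For the statement as written -- some exponential rate -- your proof fully suffices.
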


\begin{proof} 
  To see this, first let $\phi= r\psi_n$.  Then $\phi$ satisfies
$$
-\phi'' + V\phi=-E_n\phi.
$$

The proof follows directly from the asymptotic analysis of second
order linear systems in \cite{coddington1955theory}, Chapter $3$,
Theorem $8.6$.  In particular, this linear system can be seen to have
two solutions as $r \to \infty$, one of which is exponentially
decaying and one of which is exponentially growing and converging to
the rate of decay .  As a result, since an eigenfunction has decay, we
observe that for a sufficiently large $R > 0$ that
\[
  \lim_{r \to \infty} \phi (r) e^{ \int_R^r \sqrt{ E_n + V(s) } }ds =
  c
\]
for some constant $c$.
\end{proof}

Now, define $N_n$ to be the number of zeroes of $\psi_n$.  We will now
prove the following:
\begin{proposition}
  Under the same assumptions as above, whenever $-E_m > -E_n$,
  $N_{m}\geq N_{n}+1$.
\end{proposition}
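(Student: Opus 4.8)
The plan is to transform the eigenvalue problem into a one-dimensional Sturm--Liouville comparison on the half-line and to exploit the boundary behavior supplied by the regularity of $V$ at the origin and by Lemma~\ref{linearrdecay} at infinity. First I would pass to the reduced functions $\phi_n = r\psi_n$ and $\phi_m = r\psi_m$, which solve $-\phi_n'' + V\phi_n = -E_n \phi_n$ and $-\phi_m'' + V\phi_m = -E_m\phi_m$ on $(0,\infty)$. Because $V$ is smooth and bounded near the origin, this ODE is regular there and $\phi_n(0)=\phi_m(0)=0$ with finite one-sided derivatives; by Lemma~\ref{linearrdecay} both functions, together with their derivatives, decay exponentially as $r\to\infty$. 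Since $r>0$ on the open ray, the zeros of $\psi_n$ in $(0,\infty)$ coincide exactly with the interior zeros of $\phi_n$, so $\phi_n$ has zeros $0=r_0<r_1<\cdots<r_{N_n}$ and tends to $0$ at infinity; all interior zeros are simple because $\phi_n$ solves a second-order linear equation.

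The comparison is carried out through the Wronskian $W = \phi_n\phi_m' - \phi_n'\phi_m$, which satisfies
\begin{equation*}
W'(r) = \phi_n\phi_m'' - \phi_n''\phi_m = (E_m - E_n)\,\phi_n\phi_m .
\end{equation*}
The hypothesis $-E_m > -E_n$ is exactly $E_m - E_n < 0$, so on any interval where $\phi_n\phi_m$ has a fixed sign, $W$ is strictly monotone. I would then argue on each of the $N_n+1$ intervals $(r_0,r_1),\ldots,(r_{N_n-1},r_{N_n}),(r_{N_n},\infty)$ determined by the zeros of $\phi_n$, assuming for contradiction that $\phi_m$ has no zero in a given open interval $(a,b)$. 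Normalizing so that $\phi_n>0$ there, one has $\phi_n'(a)>0$ when $a$ is an interior left endpoint and $\phi_n'(b)<0$ when $b$ is an interior right endpoint, so that $W$ evaluated at an interior zero $r_i$ is $W(r_i)=-\phi_n'(r_i)\phi_m(r_i)$; moreover the regularity at the origin gives $W(0)=0$ and the decay from Lemma~\ref{linearrdecay} gives $W(\infty)=0$. A short case check on the sign of $\phi_m$ on $(a,b)$ shows that the resulting endpoint values of $W$ are incompatible with its strict monotonicity, a contradiction; hence $\phi_m$ must vanish somewhere in each open interval. As the intervals are disjoint, this produces $N_n+1$ distinct zeros of $\phi_m$, all at $r>0$, i.e.\ $N_n+1$ zeros of $\psi_m$, giving $N_m \geq N_n+1$.

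I expect the main obstacle to be the treatment of the two singular endpoints rather than the interior comparison, which is classical. The crucial extra zero---the one that upgrades $N_m\geq N_n$ to $N_m\geq N_n+1$---comes from the final interval $(r_{N_n},\infty)$, and closing that argument requires knowing that $W(r)\to 0$ as $r\to\infty$; this is precisely what the exponential decay of $\psi_n$ and $\psi_m$ (and of their derivatives) in Lemma~\ref{linearrdecay} provides. Symmetrically, the interval adjacent to the origin is handled using $W(0)=0$, which relies only on the regularity of the reduced equation there. Care must also be taken to confirm that the zeros located in distinct intervals are genuinely distinct and lie strictly in $(0,\infty)$ so that they count toward $N_m$, and to note that a possible coincidence of a zero of $\phi_m$ with one of the $r_i$ does not reduce the count.
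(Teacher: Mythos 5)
Your proof is correct and follows essentially the same route as the paper: pass to $\phi = r\psi$ and apply the standard one-dimensional Sturm comparison, with the endpoint behavior at $r=0$ and $r=\infty$ (via Lemma~\ref{linearrdecay}) supplying the extra zero. The paper simply cites \cite{coddington1955theory} for this step, whereas you have written out the Wronskian argument in full; the only detail worth flagging is that Lemma~\ref{linearrdecay} as stated controls only $\phi$ itself, so the claim $W(r)\to 0$ as $r\to\infty$ needs the (routine, and available from the same asymptotic theory) exponential decay of $\phi'$ as well.
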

\begin{proof}
  After the transformation in the proof of Lemma \ref{linearrdecay}
  this is a consequence of standard Sturm-Liouville theory in
  dimension one, see for instance the treatment in
  \cite{coddington1955theory}.
\end{proof}

Finally, we would like to confirm that each of these eigenvalues is
simple within $H^1_{\rad}$.
\begin{proposition}
  Each eigenvalue $E_n$ of the operator $H = -\Delta + V$,
  $\Delta V = \rho (|x|)$ with $\| \rho \|_{L^1} = Z$ in the class of
  radial functions, $H^1_{\rad}$, is simple.
\end{proposition}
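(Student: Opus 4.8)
The plan is to reduce the radial eigenvalue problem to a second-order linear ODE on the half-line and exploit the uniqueness theory for such equations. As in the proof of Lemma \ref{linearrdecay}, the substitution $\phi = r\psi$ transforms the radial equation $-\Delta\psi + V\psi = -E_n\psi$ into
\[
  -\phi'' + V\phi = -E_n\phi, \qquad r \in (0,\infty).
\]
Because $V$ solves $\Delta V = \rho$ with $\rho$ radial and integrable, $V$ is bounded near the origin, so this ODE has a regular endpoint at $r = 0$ and smooth coefficients on $(0,\infty)$; hence every solution is $C^2$ there.

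Next I would identify the boundary conditions an eigenfunction must satisfy. Membership $\psi \in H^1_{\rad}(\R^3)$ translates, under $\phi = r\psi$, into $\phi \in L^2(0,\infty)$ together with the regularity requirement that $\psi$ remain finite at the origin, which forces the Dirichlet condition $\phi(0) = 0$. Thus, after the substitution, every radial eigenfunction is a solution of the ODE satisfying $\phi(0) = 0$ at the origin and, by Lemma \ref{linearrdecay}, exponential decay at infinity.

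The core of the argument is a Wronskian computation. Suppose $\phi_1 = r\psi_1$ and $\phi_2 = r\psi_2$ both solve the ODE with the same eigenvalue $-E_n$. Their Wronskian $W(r) = \phi_1(r)\phi_2'(r) - \phi_1'(r)\phi_2(r)$ satisfies $W' = \phi_1\phi_2'' - \phi_1''\phi_2 = (V + E_n)(\phi_1\phi_2 - \phi_1\phi_2) = 0$, so $W$ is constant on $(0,\infty)$. Since $\phi_1(0) = \phi_2(0) = 0$ and both functions are $C^1$ up to the origin, continuity gives $W(0) = 0$, whence $W \equiv 0$. A vanishing Wronskian for two solutions of a second-order linear ODE forces linear dependence, so $\psi_1$ and $\psi_2$ are proportional. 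Therefore the eigenspace within $H^1_{\rad}$ is one-dimensional and $E_n$ is simple.

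The step I expect to be the main obstacle is the careful justification of the behavior at the origin: showing that $H^1_{\rad}$ regularity genuinely yields $\phi(0) = 0$ with enough control (in particular, $\phi \in C^1$ up to $r = 0$) for the Wronskian to be continuous and evaluated at the endpoint. This is where the boundedness of $V$ near the origin, inherited from $\Delta V = \rho$, is essential. Once the endpoint at $r = 0$ is recognized as regular and the endpoint at infinity as being in the limit-point case (guaranteed by Lemma \ref{linearrdecay}), simplicity is the standard one-dimensional Sturm--Liouville conclusion, and the remaining details are routine.
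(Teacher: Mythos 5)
Your proof is correct and takes essentially the same route as the paper, whose entire argument is a one-line appeal to Sturm--Liouville theory after the substitution $\phi = r\psi$ from Lemma \ref{linearrdecay}; your Wronskian computation with the Dirichlet condition $\phi(0)=0$ at the regular endpoint is precisely the standard argument the paper is citing. The only point worth tightening is that boundedness of $V$ near the origin should be drawn from the paper's standing smoothness hypothesis on $V$ rather than from $\rho \in L^1$ alone, but this does not affect the proof.
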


\begin{proof}
  Again, this follows from Sturm-Liouville Theory once the
  transformation in the proof of Lemma \ref{linearrdecay} is used.
\end{proof}

\section{Existence of Nonlinear Bound States}
\label{s:existence}
We now prove the existence of nonlinear solutions bifurcating from
zero mass off of each discrete linear eigenvalue.  We will follow the
argument in Kirr-Kevrekidis-Schlizerman-Weinstein \cite{KKSW} to
obtain such bifurcation curves.  First let us construct the individual
bifurcation branches.

\begin{theorem}\label{bif}
  For a given $n \in \N$, let $(-E_n, \psi_n)$ be a simple eigenpair
  of $L:=-\Delta +V(x)$ in $H^1_{\rad}$, let $P$ be the projection
  onto the eigenspace, i.e.  $P u =\langle u, \psi \rangle \psi$, and
  let $Q=I-P$ be the spectral projection onto the rest of the spectrum
  of $L$. Define
  $\delta = \frac12\min\{|\mu - E_n|: \mu \in \sigma(L)\},$ and let
  $E$ be such that $0<E-E_n < \delta$.  Then, there exists a solution
  $u_E \in H^1_{\rad}$ to \eqref{NLS} with the same number of zero
  crossings as $\psi$.
\end{theorem}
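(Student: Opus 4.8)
The plan is to set up a Lyapunov--Schmidt reduction exactly as in \cite{KKSW}. Writing the stationary equation \eqref{NLS} as $(L+E)u = \calN(u)$ and decomposing $u = a\psi + \eta$ with $a = \inner{u}{\psi} \in \R$ and $\eta = Qu \in \mathrm{Ran}(Q)$, I would project the equation with $P$ and $Q$. Since $P$ commutes with $L$ and $(L+E)(a\psi) = a(E-E_n)\psi$, the two projected equations read
\begin{equation}
  \label{e:LSproj}
  a(E - E_n) = \inner{\psi}{\calN(a\psi + \eta)}, \qquad (L+E)\eta = Q\,\calN(a\psi + \eta).
\end{equation}
The gap hypothesis $0 < E - E_n < \delta$ guarantees that $-E$ stays strictly inside the spectral gap about the eigenvalue $-E_n$ of $L$, so $(L+E)$ restricted to $\mathrm{Ran}(Q)$ is boundedly invertible; call the inverse $R_Q(E)$, with $\norm{R_Q(E)} \lesssim \delta^{-1}$.

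First I would solve the $Q$-equation for $\eta$ as a function of the amplitude $a$. The key analytic input is that the nonlocal nonlinearity $\calN(u) = (|x|^{-1}*|u|^2)u$ is a smooth (indeed real-analytic) map on $H^1_{\rad}(\R^3)$ with $\calN(0)=0$ and vanishing derivative $D\calN(0)=0$; the required boundedness of $u \mapsto \calN(u)$ from $H^1$ into $L^2$ (and hence into the space on which $R_Q(E)$ acts) follows from the Hardy--Littlewood--Sobolev inequality together with the radial Sobolev embeddings. Then $\eta \mapsto R_Q(E)\,Q\,\calN(a\psi+\eta)$ is a contraction on a small ball of $\mathrm{Ran}(Q)$ for $|a|$ small, and the implicit function theorem produces a unique smooth branch $\eta = \eta(a)$ with $\eta(0)=0$. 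Because $\calN$ is cubic, $\eta(a) = \bigo(a^3)$ in $H^1$.

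Substituting into the scalar $P$-equation yields the bifurcation equation. Using $\calN(a\psi + \eta(a)) = a^3\calN(\psi) + \bigo(a^5)$ and pairing with $\psi$ gives
\begin{equation}
  \label{e:bif}
  a(E-E_n) = \gamma a^3 + \bigo(a^5), \qquad \gamma := \inner{\psi}{\calN(\psi)} = \int\!\!\int \frac{\psi^2(x)\psi^2(y)}{|x-y|}\,dx\,dy > 0,
\end{equation}
where positivity of $\gamma$ is immediate since $\psi\not\equiv 0$. Dividing by $a\neq 0$, the relation $E - E_n = \gamma a^2 + \bigo(a^4)$ is solved, for each $E$ with $E-E_n > 0$ sufficiently small (in particular throughout the range $0<E-E_n<\delta$ after possibly shrinking $\delta$), by $a = a(E) = \pm[(E-E_n)/\gamma]^{1/2}(1+\littleo(1))$; this is again a one-dimensional implicit-function-theorem argument in the variable $a^2$. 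Setting $u_E = a(E)\psi + \eta(a(E))$ then produces a genuine solution of \eqref{NLS} bifurcating from the linear state at $a=0$, $E=E_n$.

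It remains to verify that $u_E$ has the same number of radial zero crossings as $\psi$, and this is the step I expect to be the main obstacle, since it is what fixes the topological character of each branch. The idea is that $u_E = a(E)\bigl(\psi + \bigo(a(E)^2)\bigr)$, so after normalizing by $a(E)$ the state converges to $\psi$ as $E \downarrow E_n$. Elliptic regularity for $(L+E)\eta = Q\calN(u_E)$ upgrades the $H^1$-smallness of $\eta/a$ to smallness in $C^1$ on compact sets (via the embedding $H^2_{\rad}\hookrightarrow C^{0}$ and bootstrapping against the smooth coefficients), while the exponential-decay estimate of Lemma \ref{linearrdecay}, applied to both $\psi$ and $u_E$, controls the tail and rules out zeros for large $r$. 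Since $\psi$ is a nondegenerate radial eigenfunction its finitely many zeros are simple (at a zero one has $\psi'\neq 0$, else uniqueness for the radial ODE forces $\psi\equiv 0$), so a sufficiently small $C^1$ perturbation neither destroys an existing transversal zero nor creates a new one on any compact interval. Combined with the tail control, this shows the zero count of $u_E$ equals that of $\psi$ for $E-E_n$ small, completing the proof; the delicate points are obtaining uniform pointwise and $C^1$ control near the origin and in the decaying tail simultaneously.
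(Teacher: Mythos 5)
Your proposal follows essentially the same Lyapunov--Schmidt reduction as the paper: the same decomposition $u = a\psi + \eta$ with $\eta = Qu$, inversion of the linear operator on the range of $Q$ using the spectral gap, an implicit-function-theorem solution of the $Q$-equation, and the reduced scalar bifurcation equation $E - E_n = \inner{\psi}{\calN(\psi)}\,a^2 + \bigo(a^4)$ (the paper solves for $E$ as a function of the amplitude rather than the reverse, which is equivalent). Your final paragraph on preserving the zero count via $C^1$ closeness to $\psi$, simplicity of the zeros, and tail control from the exponential decay is a worthwhile addition, since the paper's proof constructs $u_E$ but does not explicitly verify the zero-crossing claim.
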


\begin{proof}

  We seek nontrivial radial solutions $u$ to \eqref{NLS}, where
  $\|u\|_{L^2}$ is small, and therefore we expect that
  $u \sim c_0\psi_n$ and $E - E_n \sim 0 $ for $c_0$ also small.  For
  brevity, we write $\psi$ for $\psi_n$ in what follows.  In order to
  find $u$, we make the ansatz $u=c_0\psi+\eta$ with $Q\eta=\eta$.
  Substituting into \eqref{NLS}, we obtain
$$-\Delta(c_0\psi+\eta)+V(r)(c_0\psi+\eta)-\left ( \frac{1}{r} \ast |(c_0\psi+\eta)|^2 \right ) (c_0\psi+\eta)= E (c_0\psi+\eta).$$
Using the fact that $-\Delta \psi + V(r)\psi = -E_n \psi$ and that
$$Q(-\Delta +V(x)+E)\psi = Q(E-E_n)\psi =0,$$
we obtain
\begin{align*}
  c_0(E-E_n)\psi -P{\mathcal{N}}(c_0\psi+\eta) & = 0 \\
  (-\Delta+V(x)+E)\eta-Q{\mathcal{N}}(c_0\psi+\eta) & = 0
\end{align*}
where ${\mathcal{N}}(f)=(\frac{1}{r}*|f|^2)f$.  Note that
$\|{\mathcal{N}}(f)\|_{L^2} \leq k\|f\|_{H^2}^3$ for some $k > 0$, and
that ${\mathcal{N}}$ is a real analytic function in each argument.  We
will choose parameters $\nu, \rho$ later and we require that
$|c_0| < \nu$, $\|u\|_{H^2}<\rho$, and $|E-E_n| < \delta$.  Within
this open set, $(L-E_n)^{-1}Q$ is an analytic map from $L^2$ to $H^2$
with norm controlled by $\delta$, and hence it follows that
\[
  \|(L-E_n)^{-1}Q{\mathcal{N}}(c_0\psi + \eta)\|_H^2 \leq C(\delta)
  \|c_0\psi + \eta\|_{H^2}^3,
\]
and the map
\[
  F:=(c_0,E_n,\eta) \mapsto (L-E_n)^{-1}Q{\mathcal{N}}(c_0\psi + \eta)
\]
is real analytic.  Note that $F(0,E_n,0) = 0$ and $DF(0,E_n,0)=I$.
Hence, by the implicit function theorem there exist $\nu$ and $\rho$
so that on the open set described above, there is an analytic solution
$\eta(c_0,E_n)$ to
$\eta-(L-E_n)^{-1}Q{\mathcal{N}}(c_0\psi + \eta) =0$.  Note that
\[
  Q(L-E_n)^{-1}Q{\mathcal{N}}(c_0\psi + \eta)=Q(0)=0,
\] so
\[
  Q\eta = (H-E_n)^{-1}Q{\mathcal{N}}=\eta.
\]
So $\eta$ lies in the orthogonal projection away from $\psi$ as
desired.

Finally, by substituting back into the first equation, we obtain
$$c_0(E-E_n)\psi-P{\mathcal{N}}(c_0\psi_\eta(c_0,E_n))=0$$
with the condition $|c_0|^2+\|\eta(c_0,E_n)\|_{L^2}^2=\epsilon$ for
small fixed $\epsilon$.  Projecting onto $\psi$, we have that
$$E-E_n -|c_0|^2a - \frac{1}{c_0}\langle \psi, {\mathcal{N}}(c_0\psi+\eta)-{\mathcal{N}}(c_0\psi)\rangle = 0$$
where $a = \langle \psi, {\mathcal{N}}(\psi)\rangle$.  By the implicit
function theorem again, we obtain that there is a differentiable
function $h$ so that $E = h(c_0)$ in the allowed open interval.  We
may conclude that the desired solution $u_E$ exists for $E$ on this
curve.  Note that $h'(0) > 0$, so that $E > E_n$ in this regime.
\end{proof}
It follows that there is a bifurcation branch from each eigenvalue of
the linear problem.  As the spectral gap, measured by the number
$\delta$ in the above result, decreases, the range of $E$ for which
the theorem holds will be reduced.

We are also interested in the continuation of our branches away from
the zero mass limit, where we know they exist.  In particular, we
would like to know that they continue as $E \to +\infty$, and that the
branches do not intersect.

First, consider the matter of large values of $E$.  Define
$(\psi_j, E_j)$ to be the radial, normalized Coulomb eigenpairs of the
linear operator.  From Theorem \ref{bif} we have a $j$-th branch for
$E > E_j$, branching from $\psi_j$ at the zero mass limit. For each
branch, we follow \cite{KKP}, where smooth potentials are treated in
dimension one, and use the regularity of bound states with Coulomb
potentials from \cite{LS1977}.  Then, the Euler-Lagrange equations can
be seen as a map on $H^2$ functions given by
\begin{equation}
  \label{LS:EL-Hartree}
  F (Z,E;u) = -\Delta u + E u + V(|x|) u - (|x|^{-1} * |u|^2) u = 0.
\end{equation}
Consequently, away from mass zero (or for values of $E > E_j$), we can
apply the implicit function theorem directly to $F$ at $(E, u_E)$ to
construct a $C^1$ family of solutions $u_E \in H^2$ space under the
assumption that the linearization of the equation about solution
$u_E$,
\[
  L_+ = - \Delta + V(|x|) + E - (|x|^{-1} * |u_E|^2) - 2 (|x|^{-1} *
  (u_E \bullet ) )u_E,
\]
has no kernel.  This is Assumption \ref{spec_assumption} from the
introduction.  See the work \cite{Lenzmann} for a general treatment of
this problem with $V = 0$, where it is proven in their Proposition $2$
that for the ground state Hartree soliton, the kernel of $L_+$ is
trivial in the space of radial functions. We observe numerically below
(see Figure \ref{f:spec}) that each of our branches can be continued.
Using the same techniques as in Section \ref{s:numerics}, we found,
numerically, that the discretized $L_+$ operator lacks a kernel.

Moreover, the branches cannot cross.  Indeed, if two branches crossed,
then there would be a transition from a family of solutions with more
zeroes to one of fewer zeroes.  As a result, if this were to occur at
some point $r \geq 0$ along the curve, there would be a nonlinear
bound state with both value and derivative being $0$.  By ODE
uniqueness theory, this would be a trivial solution.\footnote{Note,
  there is a slight modification required at $r=0$ if $V = - Z/|x|$.
  In such a case, we must use instead of the normal radial condition
  $u_r (0) = 0$, the fact that we have $u_r (0) = -Z/2 u(0)$.}  Thus,
we conclude that if we were unable to continue a given branch in $E$,
it would not be due to branches crossing.

We note that, by the arguments for the proof of Theorem $1$ of
\cite{lions1980choquard}, for each $E>E_0$, there are an infinite
number of radial solutions with increasing energy.  As a result, we in
the next Section analytically and numerically consider the behavior of
solutions as $E\to \infty$, so long as our spectral assumption is met
and such branches can be continued.

\section{Limiting Behavior as $E \to \infty$}
\label{sec:asymp}

Following in the spirit of Section $4$ of \cite{KKP}, in this section
we consider the case $E \to \infty$ provided the lowest energy
solution branch can be uniquely continued.  The analytic results here
will apply to large $E$ behavior of the ground state branch, for the
generalized Coulomb-like equation
\begin{equation}
  \label{EL-HartreeGen}
  -\Delta u + E u + V(|x|) u - (|x|^{-1} * |u|^2) u = 0, \ \ \Delta V = \rho, \ \ \int \rho dx = Z,
\end{equation}
but with appropriate modifications a similar approach will apply to
\begin{equation}
  \label{EL-Hartree}
  -\Delta u + E u - \frac{Z}{|x|} u - (|x|^{-1} * |u|^2) u = 0.
\end{equation}
For the excited states, we lack a rigorous result on the kernel of the
linearized operator in the large $E$ limit.  Conditional on this
having trivial kernel, we can apply the same argument as in the case
of the ground state.



Without the external potential, the problem
\begin{equation}
  \label{e:NLS_V0}
  -\Delta \phi + E \phi - (|x|^{-1} * |\phi|^2)\phi = 0
\end{equation}
is solved by the $\phi_E = E \phi_1 ( \sqrt{E} x)$ where
\begin{equation}
  \label{e:NLS_V0_E1}
  -\Delta \phi_1 +  \phi_1 - (|x|^{-1} * |\phi_1|^2)\phi_1 = 0.
\end{equation}

Substituting the scaling $u = E \tilde u (\sqrt{E}x)$ into our
problem, we obtain
\begin{equation}
  \label{EL-HartreeGen-scaled}
  -\Delta \tilde u +  \tilde u + \frac{1}{ E}  V\left( \frac{x}{\sqrt{E}} \right)    \tilde u -(|x|^{-1} * | \tilde u |^2) \tilde u = 0.
\end{equation}
Note that in the pure Coulomb case we have
$V\left(x/{\sqrt{E}} \right) /E= -Z/(\sqrt{E}|x|)$, meaning that away
from zero, the potential is tending to zero, pointwise.  We wish to
show that $\tilde{u}$ is close to $\phi_1$ as $E \to \infty$ by using
the fact that the rescaled, smoothed Coulomb potential vanishes
pointwise.

Making the ansatz $\tilde u = \phi_1 + w$,
\begin{equation}
  \label{w:eqn}
  \begin{split}
    \tilde{L}w&=\mathcal{L}_+  w +  \frac{ 1}{ E}  V\left( \frac{x}{\sqrt{E}} \right)  w  \\
    &= -\frac{1}{ E}V\left( \frac{x}{\sqrt{E} } \right) \phi_1 +
    {\mathcal{N}} (\phi_1, w), \quad {\mathcal{N}} = O (\phi_1 w^2) +
    O (w^3),
  \end{split}
\end{equation}
where
\begin{equation}
  \label{Lplusdef}
  \mathcal{L}_+ w = -\Delta w + w - \left (\int \frac{ |\phi_1|^2 (y) }{|x-y|} dy \right ) w - 2 \left (\int \frac{\phi_1 w}{|x-y|} dy\right ) \phi_1.
\end{equation}
By results found in \cite[Proposition $2$, Appendix $A$]{Lenzmann},
there is a unique radial {ground state} $\phi_1$, which is positive
and exponentially decaying everywhere.  Furthermore, $\mathcal{L}_+$
is self-adjoint and non-degenerate with trivial kernel in the space of
radial functions.

For the class of bounded Coulomb potentials under consideration, since
$V \in L^\infty$, the multiplication operator
$ w \mapsto \frac{V(\frac{x}{\sqrt{E}})}{E} w$ is compact on $H^1$,
with norm at most $\frac{\|V\|_{L^\infty}}{E}$.  We also have that
$V \in L^2+(L^\infty)_\epsilon$, therefore, $\tilde{L}$ is a
relatively compact perturbation of the operator $\mathcal{L}_+$; see
Example $6$ on page $117$ of \cite{RS4}.

We have that $\tilde{L}$ is invertible on $H^1$ for sufficiently large
$E$ with an operator norm that is a perturbation of that of
$\mathcal{L}^+$.  Since $\tilde{L}^{-1}:H^1 \to H^1$ is bounded, we
have that
$\|\tilde{L}^{-1} \mathcal{N}(\phi_1, w)\|_{H^1}\leq C(
\norm{w}_{H^1}^2 + \norm{w}_{H^1}^3) $, while
\[
  \left \|\tilde{L}^{-1} \left(\tfrac{1}{ E}V\left( \tfrac{x}{\sqrt{E}
        } \right) \phi_1\right)\right\|_{H^1}\leq \tfrac{C}{E}.
\]
We therefore have that
\begin{equation*}
  \left \|\tilde{L}^{-1} \left(-\tfrac{1}{ E}V\left( \tfrac{x}{\sqrt{E} }
      \right) \phi_1 +  {\mathcal{N}} (\phi_1, w)\right)\right\|_{H^1} \leq C( E^{-1} + \norm{w}_{H^1}^2 + \norm{w}_{H^1}^3).
\end{equation*}
At the same time,
\begin{equation*}
  \begin{split}
    &\left \|\tilde{L}^{-1} \left({\mathcal{N}} (\phi_1, w_1 ) -
        {\mathcal{N}} (\phi_1, w_2 )\right)\right\|_{H^1} \\
    &\leq C(\norm{w_1}_{H^1} + \norm{w_1}_{H^1}^2 + \norm{w_2}_{H^1} +
    \norm{w_2}_{H^1}^2)\|w_1 - w_2\|_{H^1}.
  \end{split}
\end{equation*}
Now assume that $w$ is small, for instance, $\bigo(E^{-1/2})$.  Then,
by a standard iteration argument, a solution $w$ will be found in
$B_R(0)\subset H^1$, with $R \sim E^{-1/2}$ as $E$ becomes large.  To
conclude, we can construct solutions along our ground state branch
such that $w \to 0$ as $E \to \infty$ and the profile of our solutions
approaches $\phi_1$ for large $E$ as claimed. See Figure
\ref{f:logplots} for numerical exploration of this scaling limit,
which confirm the desired scaling as $E$ becomes large.

As a side note, one could prove that solutions to
\eqref{EL-HartreeGen} for a given $E$ have an $L^2$ norm indicated by
the scalings used above, then as in \cite{KKP} for the $1d$ case with
smooth potential, concentration compactness tools could be used to
give a simpler proof of the convergence explored for the ground state
branch.  However, as the nonlinearity has such nice algebraic
properties, we have taken the approach of using elliptic estimates
directly.

\begin{figure}[h]
  \includegraphics[width=6.25cm]{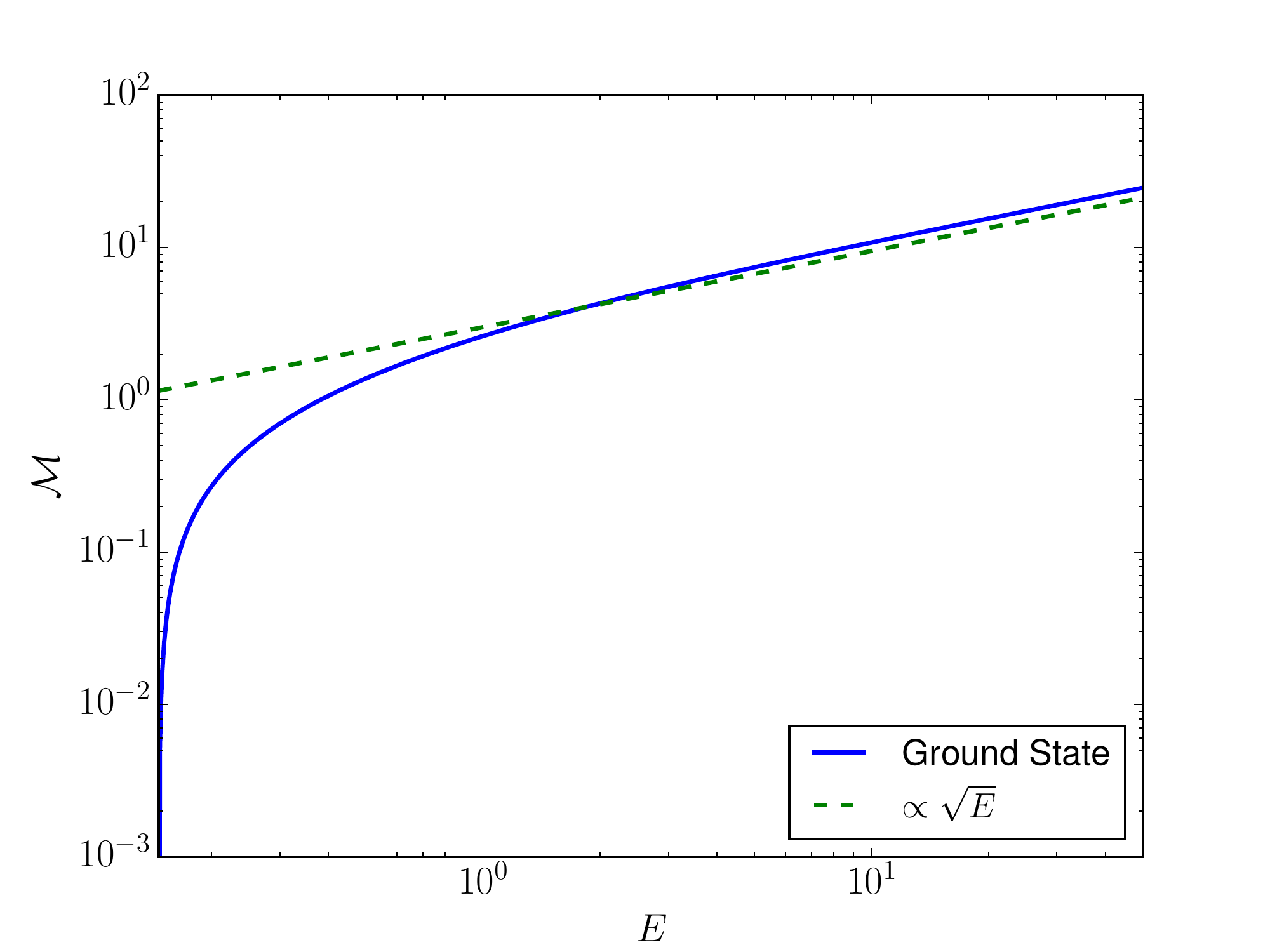}
  \includegraphics[width=6.25cm]{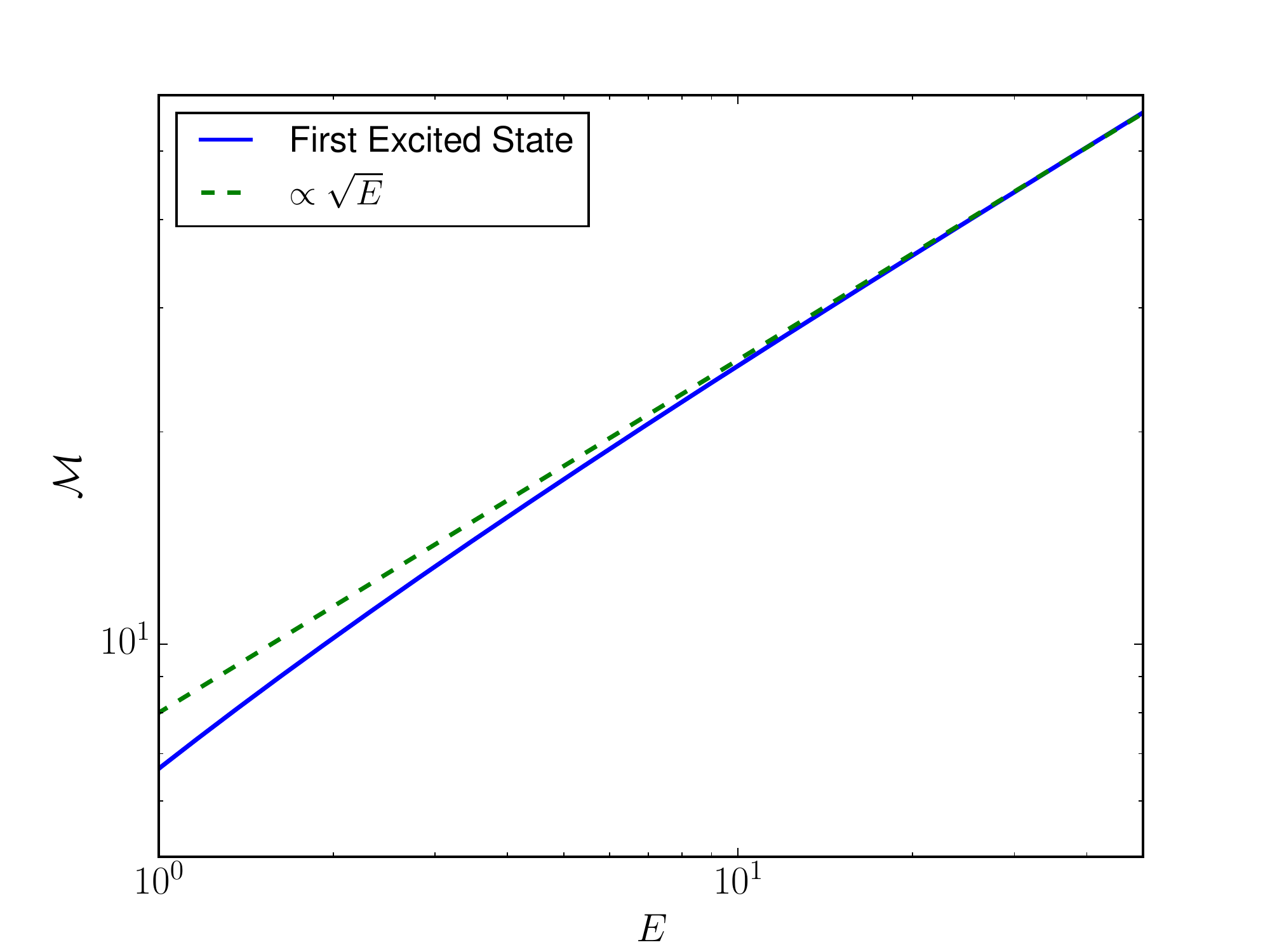}
  \caption{Plots of $E$ vs. mass for large $E$ for the first branch
    (left) and a zoom in on for large $E$ on the second branch
    (right).  Note, the slope approaches $\tfrac{1}{2}$ on the
    $\log - \log$ scale.}
  \label{f:logplots}
\end{figure}

\begin{remark}
  A dividend of this examination of the large $E$ limit is that it
  provides a strategy for computing excited state solutions for
  \eqref{e:NLS_V0}.  While, in practice, one would not solve for
  $E=\infty$, one could solve Schr\"odinger-Poisson for large values
  of $E$, to get, after rescaling, an approximation of the solution to
  the $V=0$ problem.  This preconditioned guess could then be fed to a
  Newton solver.  See \cite{Olson:2014ix} for a discussion on
  computing excited states along with an alternative strategy for
  obtaining states with a given number of zero crossings.


\end{remark}

\section{Stability}
\label{s:stability}

In the context of the stability works of Grillakis-Shatah-Strauss
\cite{GSS}, also known as the Vakhitov-Kolokolov criterion, we
consider the problem of orbital stability, restricted to radial
functions, of our solution.  By orbital stability, we mean that for
any $\epsilon>0$, there exists a $\delta$ such that if
$\norm{u_0 - \phi}_{H^1} \leq \delta$, then for all $t\geq 0$,
\begin{equation*}
  \inf_{\theta} \norm{u(t) - e^{i \theta} \phi }\leq \epsilon.
\end{equation*}
Orbital stability makes no claim as to any particular asymptotic
behavior.

To proceed, recall that we can write the linearized evolution
operator, in terms of real and imaginary parts, as
\begin{equation}
  \label{e:lin_op}
  H = J L = \begin{pmatrix} 0 & 1 \\ -1 & 0\end{pmatrix}\begin{pmatrix}L_+ & 0\\ 0 & L_-\end{pmatrix} = \begin{pmatrix}  0 & L_-\\ -L_+ & 0 \end{pmatrix},
\end{equation}
where
\begin{subequations}
  \label{e:lin_ops}
  \begin{align}
    L_- &= -\Delta + E + V(x) - (|x|^{-1} |u_E|^2), {\mbox{ and }}\\
    L_+ &= -\Delta + E + V(x) - (|x|^{-1} \ast|u_E|^2) - 2( |x|^{-1} \ast (u_E \bullet)) u_E.
  \end{align}
\end{subequations}
Also, define the scalar function
\begin{equation}
  \label{e:scalar_energy}
  d(E) = \mathcal{H}(u_E) + E \mathcal{M}(u_E).
\end{equation}
Recall then, the results of \cite{GSS} (see, also,
\cite{Weinstein,Weinstein85,Grillakis,Grillakis:1990p116,KM}), adapted
to this problem.  Let $p(d''(E)) = 1$ if $d''(E)>0$ and let
$p(d''(E))=0$ otherwise.  Let $n(L) = n(L_-) + n(L_+)$ be the number
of negative eigenvalues of the operators.  Subject to some assumptions
on well-posedness of the flow, the existence of the bound states, and
the ability to decompose the spectrum of $L$, we have:
\begin{theorem}[from \cite{Grillakis:1990p116}]
  Assume $d''(E) \neq 0$, then
  \begin{description}
  \item[Stability] If $n(L) = p(d'')$, the bound state is orbitally
    stable,
  \item[Instability] If $n(L) - p(d'')$ is odd, then the soliton is
    orbitally unstable.
  \end{description}
\end{theorem}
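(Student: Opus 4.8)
The plan is to verify that \eqref{SchPoi} meets the abstract hypotheses of the Hamiltonian stability theory of \cite{GSS,Grillakis:1990p116} and then to invoke their dichotomy; since the statement is quoted from that framework, the substance lies in checking hypotheses and computing the relevant indices, not in rebuilding the machinery. First I would recast \eqref{SchPoi}, in the real and imaginary variables $\binom{\Re\phi}{\Im\phi}$, as a Hamiltonian flow generated by the conserved energy $\mathcal{H}$ through the skew operator $J$ of \eqref{e:lin_op}, with the mass $\mathcal{M}$ playing the role of the charge associated to the phase symmetry $\phi\mapsto e^{i\theta}\phi$. The standing wave $u_E$ is then a critical point of the action $\Lambda:=\mathcal{H}+E\mathcal{M}$, and its second variation is exactly the block-diagonal self-adjoint operator $L=\mathrm{diag}(L_+,L_-)$ of \eqref{e:lin_ops}. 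This reduces the entire question to the spectral data $n(L)=n(L_+)+n(L_-)$ and to the scalar $d(E)$ of \eqref{e:scalar_energy}.

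The hinge of the argument is an identity obtained by differentiating the Euler--Lagrange equation \eqref{NLS} along the branch. Since $u_E$ solves $\Lambda'(u_E)=0$, differentiating in $E$ gives
\[
  L\,\partial_E u_E = -\mathcal{M}'(u_E),
\]
which lives in the $L_+$ block because $u_E$ and $\partial_E u_E$ are real. Combined with the envelope relation $d'(E)=\mathcal{M}(u_E)$, pairing against $\partial_E u_E$ yields
\[
  \inner{L^{-1}\mathcal{M}'(u_E)}{\mathcal{M}'(u_E)} = -d''(E).
\]
This is the bridge between the slope of the mass--$E$ curve (the quantity probed numerically in Figure \ref{f:logplots}) and a Morse index count. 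By the standard constrained-index lemma, restricting $L$ to the tangent space $\{v:\inner{v}{\mathcal{M}'(u_E)}=0\}$ of the fixed-mass manifold lowers the negative-eigenvalue count by one exactly when $\inner{L^{-1}\mathcal{M}'(u_E)}{\mathcal{M}'(u_E)}<0$, i.e.\ when $d''(E)>0$, and leaves it unchanged when $d''(E)<0$; the non-degeneracy $d''(E)\neq 0$ assumed in the statement is precisely what makes this dichotomy clean. Since the phase direction $iu_E$ spans the kernel of $L$ and is orthogonal to $\mathcal{M}'(u_E)$ (one sits in the $L_-$ block, the other in the $L_+$ block), the number of negative directions of $L$ restricted to the fixed-mass tangent space and taken modulo the symmetry kernel is precisely $n(L)-p(d'')$.

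For the stability half I would run the Lyapunov argument: $\Lambda$ is conserved and critical at $u_E$, so when $n(L)=p(d'')$ the constrained Hessian is positive definite, and $\Lambda(u)-\Lambda(u_E)$ becomes coercive on the fixed-mass sphere transverse to the group orbit. This gives control of $\dist(u(t),\{e^{i\theta}u_E\})$ uniformly in $t$, hence orbital stability. For the instability half, an odd value of $n(L)-p(d'')$ means the constrained quadratic form carries an odd number of negative directions; by the Hamiltonian index theory of \cite{Grillakis:1990p116}, the parity of this constrained Morse index governs the number of real unstable eigenvalues of the non-self-adjoint operator $JL$, so an odd count forces at least one, producing an exponentially growing solution of the linearized flow that is then upgraded to orbital instability (alternatively, the convexity argument of \cite{GSS} builds an escaping Lyapunov functional directly).

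The main obstacle I anticipate is not the abstract dichotomy but the verification of its analytic hypotheses for this nonlocal, long-range problem, which the statement takes for granted. I would need local well-posedness of \eqref{SchPoi} in $H^1_{\rad}$ and a $C^1$ branch $E\mapsto u_E$ (both resting on Assumption \ref{spec_assumption}, the triviality of $\ker L_+$ already used in Theorem \ref{bif}), together with the spectral decomposition of $L$: finitely many negative eigenvalues, a kernel reduced to the phase direction, and essential spectrum contained in $[E,\infty)$ and hence bounded away from zero. The delicate points are that the nonlocal Hartree term $\calN$ is sign-indefinite and only $H^2$-bounded, so establishing the coercivity of the constrained form needs care, and that the Coulomb-like tail of $V$ must be shown not to move the essential spectrum off $[E,\infty)$, in the spirit of the Rollnik-class perturbation arguments of Section \ref{s:linear}. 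Finally, the sign of $d''(E)$, and therefore $p(d'')$ along each branch, is exactly the input the numerics of Section \ref{s:numerics} supply where a closed-form evaluation is unavailable.
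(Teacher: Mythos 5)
Your proposal is correct in outline, but note that the paper does not prove this theorem at all: it is imported verbatim from \cite{Grillakis:1990p116} (with \cite{GSS}) as a black box, with the standing hypotheses dismissed in one clause (``subject to some assumptions on well-posedness of the flow, the existence of the bound states, and the ability to decompose the spectrum of $L$''). The only piece of the machinery the paper actually derives is the envelope identity $d'(E)=\mathcal{M}(u_E)$, which you also obtain and use in the same way. So the comparison is really between a citation and your reconstruction of what sits behind it. Your reconstruction is faithful: the identity $L\,\partial_E u_E=-\tfrac12\mathcal{M}'(u_E)$ (you dropped a harmless factor of $2$), the consequence $\langle L_+^{-1}u_E,u_E\rangle=-\tfrac12 d''(E)$, the constrained-index lemma converting the sign of $d''$ into the Morse index on the fixed-mass tangent space, the Lyapunov coercivity argument when $n(L)=p(d'')$, and the parity-of-index instability criterion are exactly the contents of the Grillakis--Shatah--Strauss theory. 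What your route buys is that it surfaces the two verifications the paper leaves implicit and which are genuinely nontrivial here: local well-posedness of the flow in $H^1_{\rad}$ with a long-range potential plus a nonlocal Hartree term, and the spectral decomposition of $L$ (in particular that $\ker L$ is exactly the phase direction, which is Assumption \ref{spec_assumption}, and that the infinitely many positive eigenvalues of $L_\pm$ accumulating below $E$ do not interfere with the negative-index count). What the paper's route buys is brevity, at the cost of leaving those hypotheses unverified. One point worth making explicit, which the paper does stress and your sketch only implies: when $n(L)-p(d'')$ is a nonzero even number --- the situation for every excited state computed in Section \ref{s:numerics} --- the theorem as stated gives no conclusion in either direction, which is precisely why the paper falls back on computing $\sigma(JL)$ directly.
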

In some important cases, such as NLS with a power nonlinearity, these
properties can be deduced analytically; this is the content of some of
the formative works on soliton stability.  For our problem, however,
we must numerically compute the bound state of energy $E$, compute
$d''(E)$, and then count the number of eigenvalues of the discretized
operators $L_\pm$.

These computational tasks, detailed below, are readily addressed.
Briefly, we find that the ground state soliton is orbitally stable, as
is typical for subcritical problems.  For the excited states, we find
that $n(L) - p(d'')$ is even in the cases we compute; this case is not
addressed by the above theory.  We thus perform both direct
computation of the spectrum $JL$, as well as time dependent
simulations of the excited states with finite perturbations.  For
sufficienlty large $E$, the excited states appear to be linearly
unstable.


To simplify these computations, slightly, we recall from
\cite{Grillakis,Weinstein} that, for nonlinear bound states in one
parameter, an important identity can be obtained for $d''(E)$.
Observe that, in general,
\[
  d'(E) = \left( \frac{\delta \mathcal{H}}{\delta u}, \partial_E
    u_E\right) + \mathcal{M}(u_E) + E\left( \frac{\delta
      \mathcal{M}}{\delta u }, \partial_Eu_E\right).
\]
Since the variations are evaluated at $u_E$, and $u_E$ satisfies the
PDE,
\begin{equation}
  \label{e:slope_condition}
  d'(E) = \mathcal{M}(\phi_E).
\end{equation}
Thus, $d''(E)>0$ if and only if $\mathcal{M}(u_E)$ is a decreasing
function in $E$ corresponding to the Vakhitov-Kolokolov criterion.  In
our computations, we find that, in all cases examined, $d''(E) >0$;
$\mathcal{M}(u_E)$ is an increasing function of $E$.

\section{Numerical Computation of Bound States and Stability}

\label{s:numerics}

Our approach to computing the nonlinear bound states to \eqref{NLS} is
to start with a bound state with the desired number of zero crossings
for the associated linear problem
\begin{equation}
  \label{e:NLS_lin}
  -\Delta u + V(|x|) u = -E u,\quad \norm{u}_{L^2} = 1.
\end{equation} 
We then perform numerical continuation to obtain the desired nonlinear
bound state.  During the continuation, the number of zero crossings is
invariant.

While it is convenient to think of the linear bound state as the zero
mass limit of the nonlinear bound state, this is impractical for
numerical continuation.  Instead, we augment \eqref{NLS} with the
artificial continuation parameter, $\gamma\in [0,1]$, to become
\begin{equation}
  \label{e:continuation}
  -\Delta u +V(|x|) u - \gamma \mathcal{N}(u) = - E u, \quad
\end{equation}
Then, along a sequence of $\gamma$ values,
\begin{equation*}
  0 = \gamma_0 < \gamma_1 <\ldots < \gamma_{n_\gamma -1} = 1,
\end{equation*}
$(u^{(i)}, E^{(i)})$ pairs are computed, all with $L^2$ norm of unity.

Once the value at $\gamma=1$ is obtained, the mass constraint is
relaxed, and $E$ is varied to determine, for instance, $d''(E)$.  At
each value of $E$, the eigenvalues of matrix discretizations of
$L_\pm$ are computed.

For concreteness, $V$ is the smooth radial function solving
\begin{equation}
  \label{e:concreteV}
  \Delta V = \frac{1}{2}e^{-r}, \quad  V(r) = \frac{1}{2}e^{-r} -\frac{1}{r}(1-e^{-r}).
\end{equation}

\subsection{Computation of the Linear States}

To begin with, we compute the eigenvalues of \eqref{e:NLS_lin} using
its associated weak form and piecewise linear, radial finite elements.
A Neumann condition is applied at the origin, and a ``big box''
homogeneous Dirichlet approximation is made at $r_{\max}$, assumed to
be sufficiently large.  For $\lambda>0$, the states will be
exponentially localized, so this is a reasonable approximation.
However, since the point spectra tend to zero, the decay rates at the
$n$th eigenvalue of the order,
$\propto r^{ n } \exp(-\sqrt{\lambda}r)$ will demand ever larger
values of $\rmax$ in order to be well approximated.  For this reason,
we will only consider the first few eigenstates.

For a fixed $\rmax$, the corresponding linear system is
\begin{equation}
  \label{e:linear_eig}
  \underbrace{{\bf K}_{\rm Dir.}}_{\text{Stiffness Matrix}} + \underbrace{{\bf
      V}_{\rm Dir.}}_{\text{Potential Matrix}} = -\lambda\underbrace{ {\bf
      M}_{\rm Dir.}}_{\text{Mass Matrix}}.
\end{equation}
Recall that since our basis is the set of hat functions,
$\{\varphi_i\}$, on $[0, r_{\max})$,
\begin{gather*}
  ({{\bf K}_{\rm Dir.}})_{ij} = \int_0^{r_{\max}} \varphi_i'(r)
  \varphi_j'(r) r^2dr , \quad ({{\bf V}_{\rm Dir.}})_{ij}=
  \int_0^{r_{\max}} V(r)\varphi_i(r)
  \varphi_j(r) r^2dr, \\
  \quad ({{\bf M}_{\rm Dir.}})_{ij}= \int_0^{r_{\max}} \varphi_i(r)
  \varphi_j(r) r^2dr.
\end{gather*}
${\bf V}_{\rm Dir.}$ is computed using numerical quadrature.  The
eigenstates are then computed, as illustrated in Figure
\ref{f:linear_profiles}.  Since the states are highly localized, we
use a nonuniform mesh, given by
\begin{equation}
  \label{e:nonumesh}
  r_j = \sinh(\xi_j), \quad \xi_j = j\delta \xi, \quad
  \delta\xi=\frac{\arcsinh{\rmax}}{N}, \quad j =0,\ldots, n.
\end{equation}
This spaces the nodes linearly near the origin and exponentially
further apart as $j$ increases.

\begin{figure}
  \includegraphics[width=8cm]{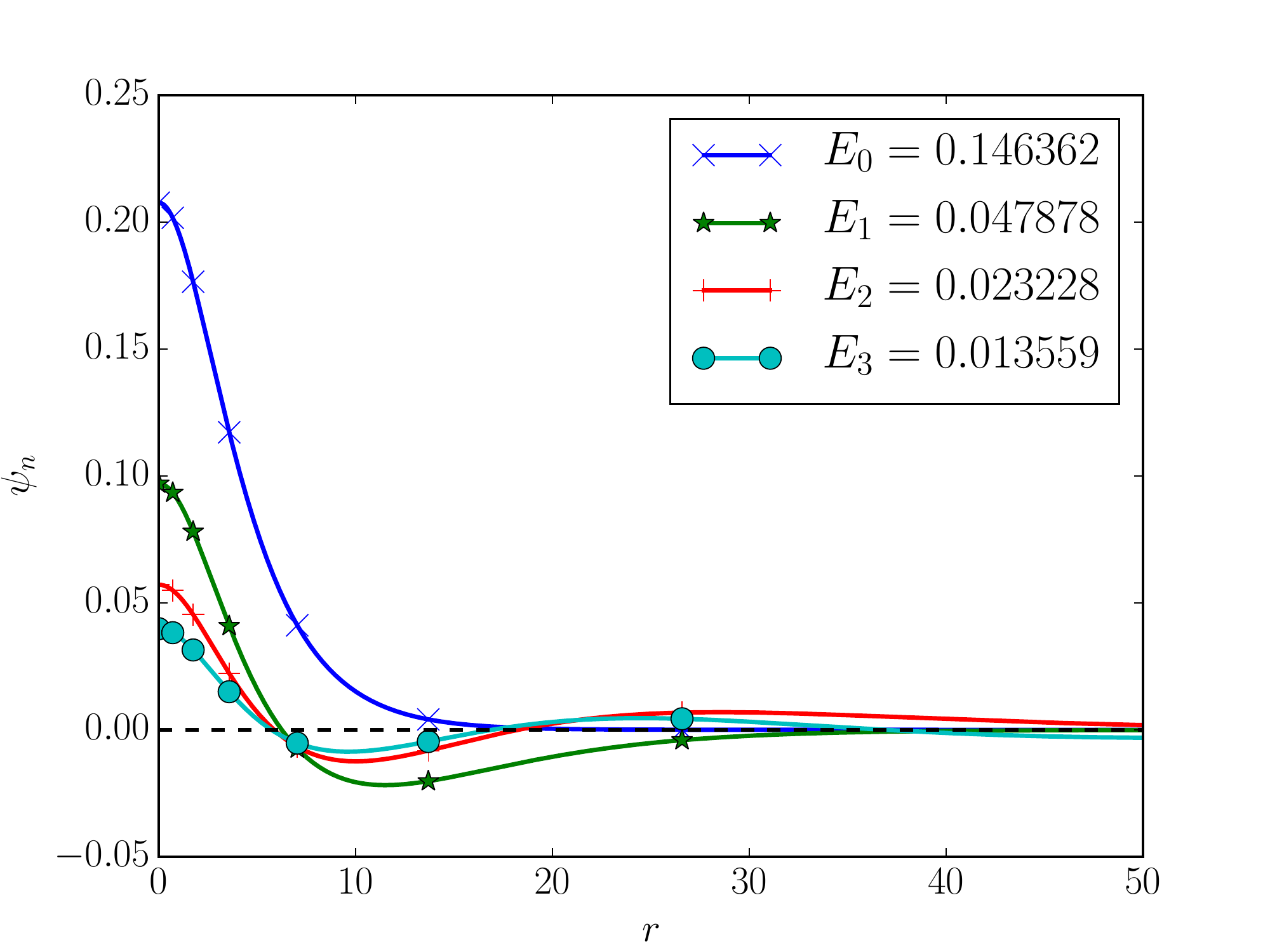}
  \caption{The ground state and the first few excited states of the
    associated linear problem, \eqref{e:NLS_lin}, with $V(r)$ given by
    \eqref{e:concreteV}.  Computed using \eqref{e:linear_eig} on the
    mesh given by \eqref{e:nonumesh} with $n = 4000$ and
    $\rmax = 100$.}
  \label{f:linear_profiles}
\end{figure}

\subsection{Computation of the Nonlinear States}

Given the solution to the linear problem at $\gamma=0$, we must now
use a nonlinear solver to obtain the desired solution at $\gamma=1$.
This is performed using the Python implementation of
\cite{Shampine:2006vr} to solve \eqref{e:continuation}.  This software
is available at \url{https://pythonhosted.org/scikits.bvp_solver/}.

\subsubsection{First Order System}
To use this software package, we must first reformulate our problem as
a first order system, with associated boundary conditions.  We first
remove the nonlocality, by writing our problem as a system of
constrained second order equations:
\begin{subequations}
  \label{e:schod_poi_system1}
  \begin{gather}
    E u - u'' - \frac{2}{r}u' +  V(r) u - w u  = 0,\\
    - w'' - \frac{2}{r}w' = {|u|}^2, \\
    \int_0^{\infty} |u|^2 r^2 dr = 1
  \end{gather}
\end{subequations}
This is then transformed into the aforementioned first order system,
with $v=u'$, $z=w'$, and $m(r)$ being the accumulated mass in $[0,r]$.
\begin{equation}
  \label{e:first_order}
  \frac{d}{dr}\begin{pmatrix}
    u\\v\\w\\z\\m
  \end{pmatrix} = \frac{1}{r}\begin{bmatrix}0 & 0 & 0 & 0 & 0\\
    0 & -2 & 0 & 0 &0\\
    0 & 0 & 0 & 0 & 0\\
    0 & 0 & 0 & -2 & 0\\
    0 & 0 & 0 & 0 & 0
  \end{bmatrix}\begin{pmatrix}
    u\\v\\w\\z\\m
  \end{pmatrix} + \begin{pmatrix} v\\ V(r) u-\gamma w u + E u \\ z \\
    -|{u}|^2 \\ u^2r^2 \end{pmatrix}
\end{equation}
In the above expressions, we will use $V(r)$ as given by
\eqref{e:concreteV}.

\subsubsection{Boundary Conditions}
It is now necessary to specify boundary conditions for
\eqref{e:first_order}.  First, we have the natural boundary conditions
that $u$ and $w$ be radially symmetric functions.  Furthermore, the
mass density, $m(r)$, must be zero at the origin.  This yields the
following three boundary conditions:
\begin{equation}
  \label{e:natural_bc1}
  u'(0) = v(0) = 0, \quad w'(0) = z(0) = 0, \quad m(0) = 0.
\end{equation}
Next, since the computation is performed on a large, but finite,
domain, suitable approximate boundary must be imposed at $r=r_{\max}$.
First, we observe that, since $u$ is localized, we can enforce the
fixed mass condition by approximating
\begin{equation}
  \label{e:mass_farfield_bc}
  m(\rmax) = \int_0^{\rmax}|u|^2 r^2 dr= 1.
\end{equation}
Next, we first write the equation for $w$ as
\begin{equation*}
  (w' r^2)' = -r^2 |u|^2\Rightarrow w'(r) = -\frac{m(r)}{r^2}.
\end{equation*}
Since, for large $r$, $m(r)$ is approximately constant, we have
\begin{equation*}
  w(r) \approx \frac{m(\rmax)}{r}.
\end{equation*}
This gives rise to our next approximate boundary condition,
\begin{equation}
  \label{e:w_farfield_bc}
  z(\rmax) + \frac{1}{\rmax} w(\rmax) = 0.
\end{equation}
Finally, at large $r$,
\begin{equation}
  \label{e:u_farfield}
  0\approx  Eu-u'' - \frac{2}{r}u' + V u  - w u\approx Eu-u'' - \frac{2}{r}u' -\frac{1}{r} u  - \frac{m(\rmax)}{r} u
\end{equation}
and we arrive at the approximate Robin condition
\begin{equation}
  \label{e:u_farfield_bc}
  v(\rmax) + \paren{\frac{1}{\rmax} + \sqrt{E}- \frac{1 + m(\rmax)}{2
      \rmax\sqrt{E}}}u(\rmax) = 0
\end{equation}

The reader may ask, why, in \eqref{e:w_farfield_bc} and
\eqref{e:u_farfield_bc}, we have not replaced $m(\rmax)$ by one, as in
\eqref{e:mass_farfield_bc}.  The reason is that, in the first stage of
our computation, we will solve for $E$, as an unknown, at fixed
$L^2$-mass.  Subsequently, we will allow $E$ to be a specified
parameter, and $L^2$ will be an unknown.  When $E$ is specified, we
discard \eqref{e:mass_farfield_bc}, but continue to use
\eqref{e:w_farfield_bc} and \eqref{e:u_farfield_bc}, with $m(\rmax)$
an unknown that is solved for.

\subsubsection{Fixed Mass Profiles}

\begin{figure}

  \subfigure{\includegraphics[width=6.25cm]{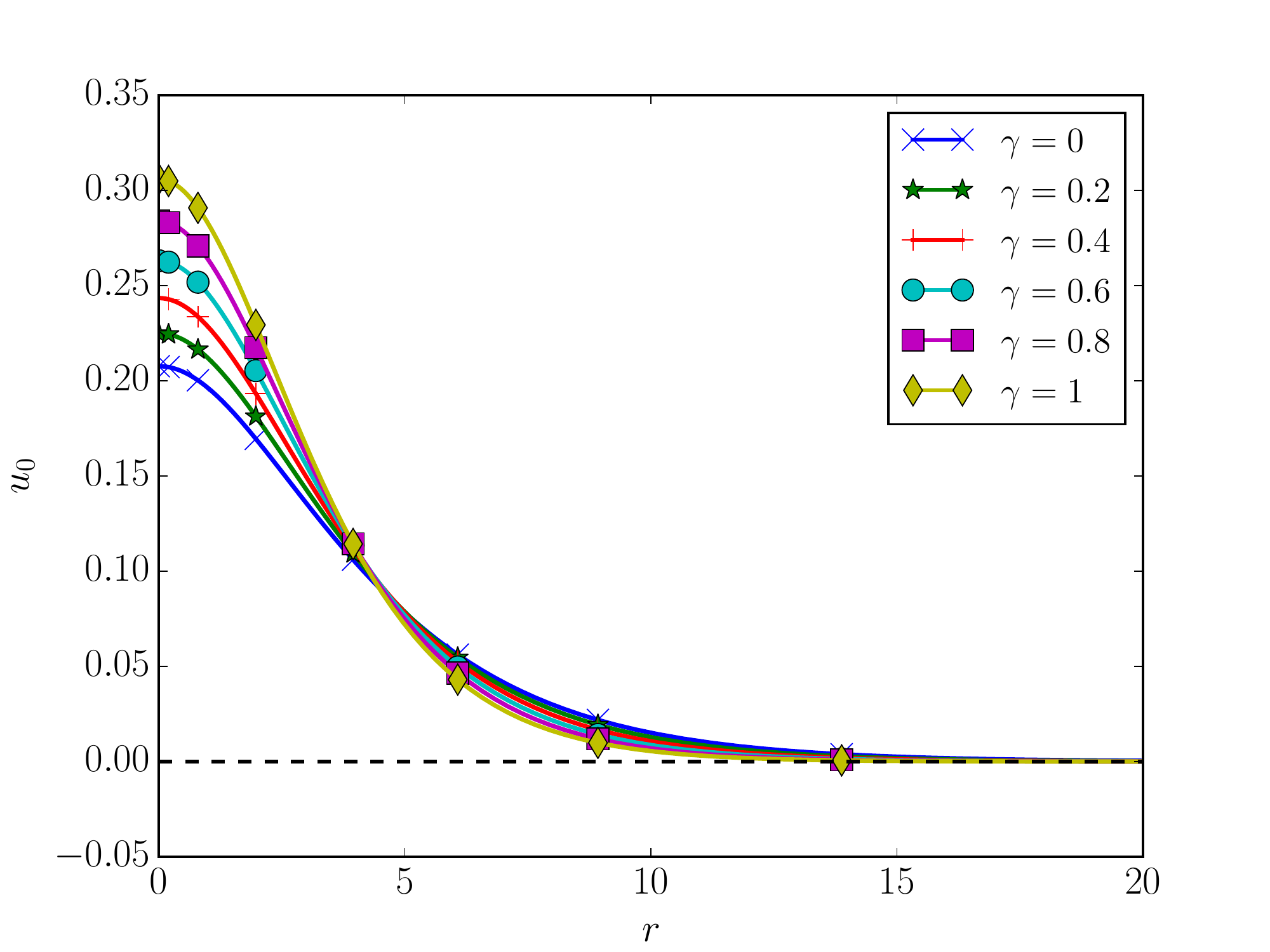}}
  \subfigure{\includegraphics[width=6.25cm]{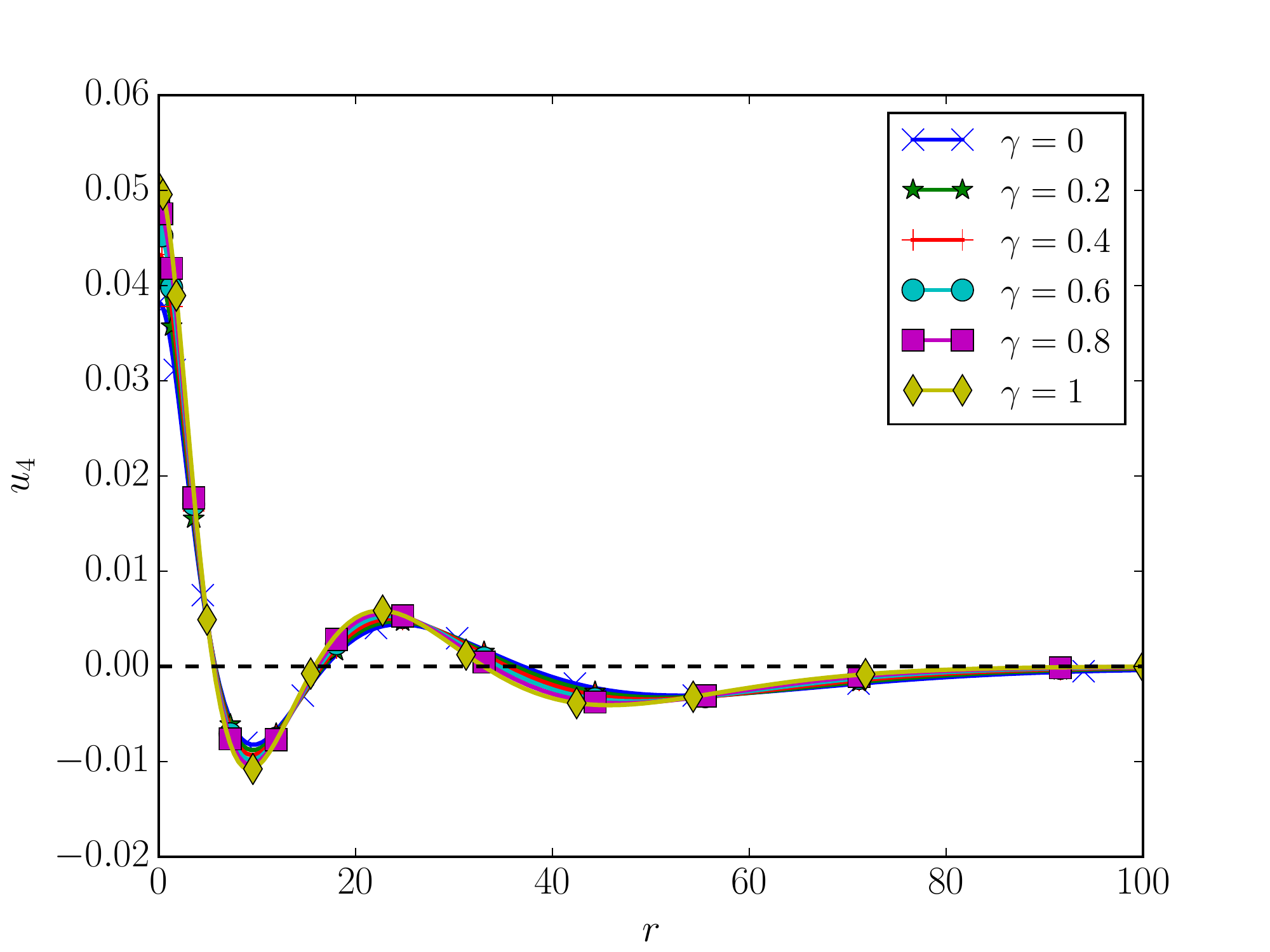}}

  \caption{Some of the solutions computed during continuation of
    $\gamma$ from zero to one in \eqref{e:continuation}.  In all
    computed cases, the number of zero crossings was invariant during
    the continuation.}
  \label{f:cont_profiles}
\end{figure}

For mass fixed at one, our continuation strategy produces the sequence
of solutions indicated in Figure \ref{f:cont_profiles} for the ground
state and an excited state with four zero crossings. Note that if
$u^\gamma$ solves \eqref{e:continuation}, with mass one, then
$U^\gamma = \sqrt{\gamma} u^\gamma$ solves
\[
  -\Delta U^\gamma + V U^\gamma - \mathcal{N}(U^\gamma) = - E
  U^\gamma, \quad \mathcal{M}[U^\gamma] = \gamma.
\]
Thus, this figure can also be interpreted as the branching of $E$ off
of the linear eigenvalues from the linear zero amplitude solutions.
Several profiles at $\gamma =1$ are shown in Figure \ref{f:gamma1}.




\begin{figure}
  \includegraphics[width=8cm]{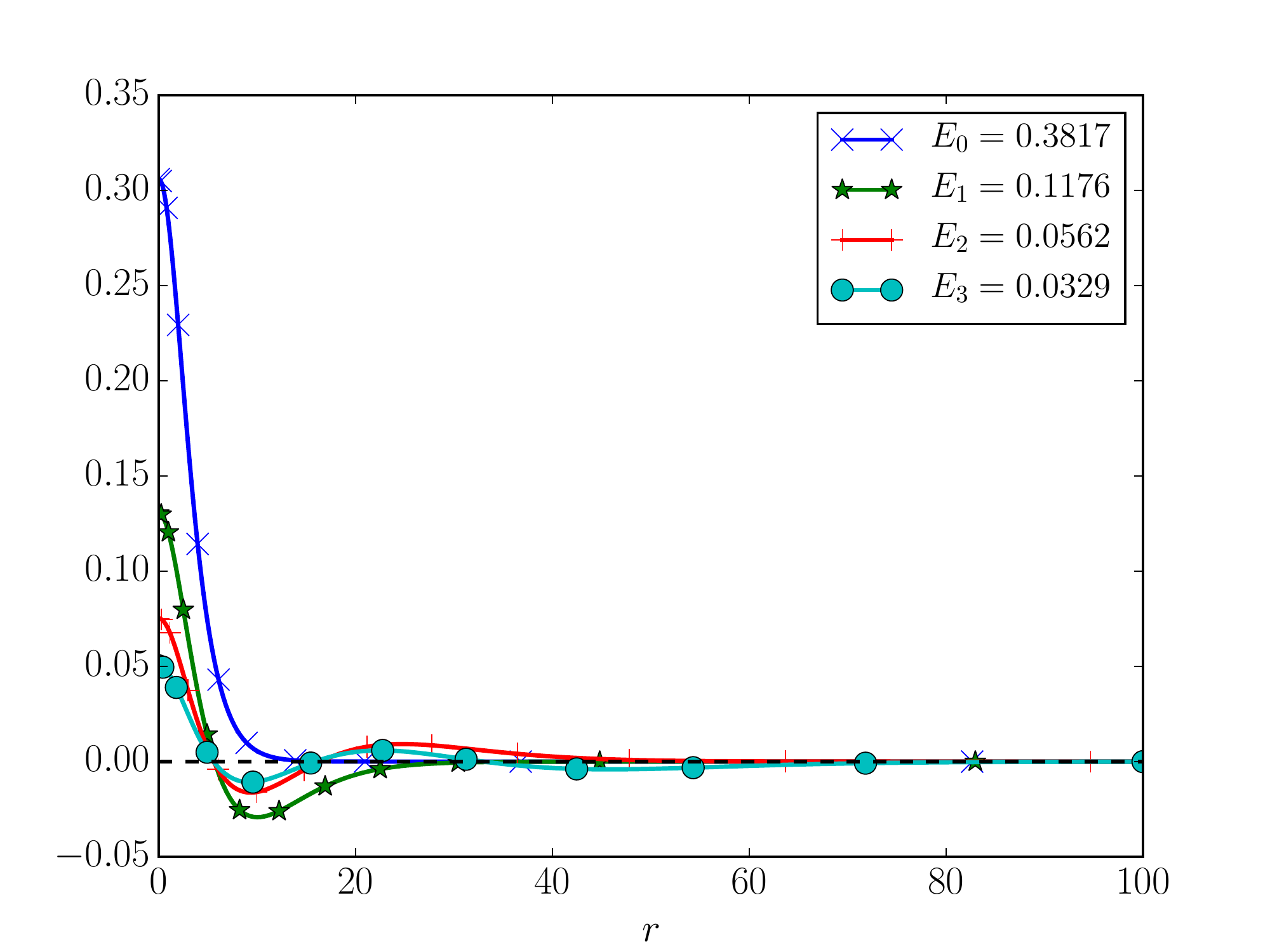}

  \caption{Profiles for the ground state and several excited states at
    $\gamma=1$.  All have mass one.}

  \label{f:gamma1}
\end{figure}

\subsubsection{Variable $E$ Profiles}

Starting from the mass one profiles, we vary $E$ about the value
computed above, and compute a collection of profiles for each of the
nonlinear bound states.  We plot the mass as a function of $E$ in
Figure \ref{f:massEcurves}.  Recall from the slope condition,
\eqref{e:slope_condition}, that since these appear to be strictly
increasing in all cases, $p(d'')=1$ in all of the cases we have
computed.  We speculate that this is true for all cases of this
problem. The maximum value of $E$ at which we computed is, for each
branch, twice the value of $E$ corresponding to the mass one problem.
Each branch terminates at the corresponding eigenvalue of the
associated linear problem.

\begin{figure}
  \includegraphics[width=8cm]{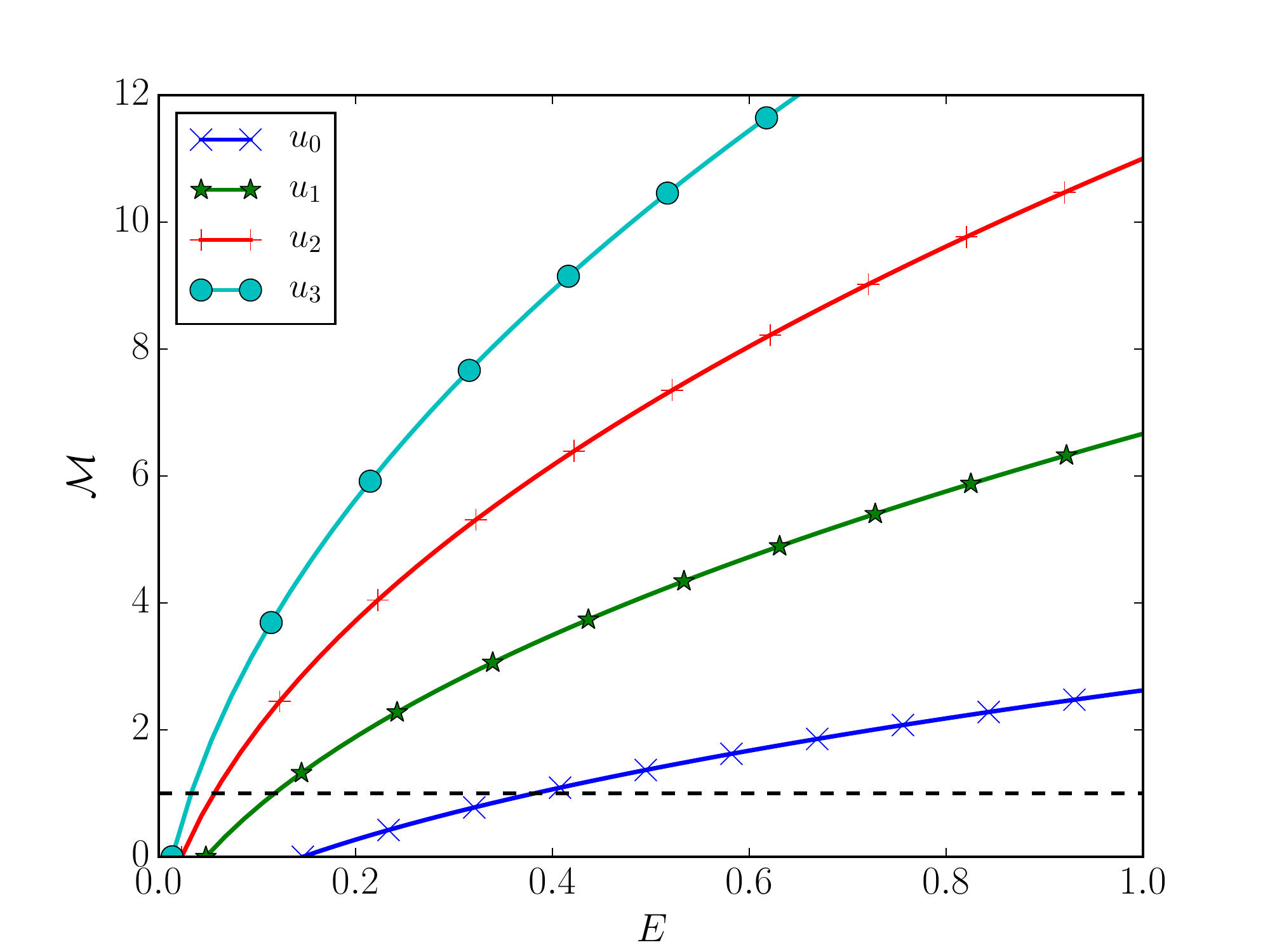}
  \caption{Mass as a function of $E$ for several branches of the
    problem.  In all cases, the curves appear to be monotonically
    increasing.}
  \label{f:massEcurves}
\end{figure}


\subsubsection{Remarks}

In our experience, this approach was highly robust.  The continuation
strategy from the linear problem to the nonlinear problem required a
modest number of intermediate values of $\gamma$; $\delta \gamma=0.05$
was used in the above calculations.  A slight difficulty occurs when
considering states which branch from linear states with eigenvalues
close to the origin.  As mentioned earlier, while these will decay
exponentially, the successively slower decay will require larger and
larger domains.

\subsection{Stability Calculations}

To proceed with an analysis of the stability, we first need to
discretize the operators, $L_\pm$, and then compute their eigenvalues.

\subsubsection{Discretization of the Operators}
To compute the spectrum of $L_\pm$, we continue to work within the FEM
context.  The one subtlety to this is how to represent the nonlocal
linear operator, $(|x|^{-1} \ast(u \bullet)) u$, in the weak form.
Let $T$ denote this operator.  We approximate it as follows.  First
note that the Galerkin weak form is
\begin{equation}
  \inner{T \varphi_i}{\varphi_j} = \langle u \underbrace{(|x|^{-1} \ast(u \varphi_i))}_{\equiv\psi^{(i)}} ,\varphi_j\rangle.
\end{equation}

Observe that $\psi^{(i)}$ solves
\begin{equation}
  \label{e:psi1}
  -\Delta \psi^{(i)} = u \varphi_i, \quad \partial_{r}\psi^{(i)}(0)=0,
  \quad \lim_{r\to\infty}\psi^{(i)}(r) =0.
\end{equation}
An artificial boundary condition is now needed to numerically solve
this on our computational domain. First, we observe that since
$\varphi_i$ have finite support and $u$ is highly localized, at large
values of $r$, $-\varphi^{(i)}\approx 0$.  Thus, we introduce the
Robin condition at $\rmax$:
\begin{equation}
  \label{e:psi2}
  \partial_{r}\psi^{(i)}(\rmax)   + \tfrac{1}{\rmax}\psi^{(i)}(\rmax)
  = 0.  
\end{equation}
The $\psi^{(i)}$ are then approximated in the space
${\rm span}(\varphi_0, \ldots, \varphi_n)$ by solving
\begin{equation}
  K_{\rob}\boldsymbol{\psi}^{(i)} = U_{\mat} \mathbf{e}_i,
\end{equation}
for $i =1,\ldots, n$.  The matrix $U_{\mat}$ is given by
\begin{equation}
  \label{e:Umat}
  (U_{\mat})_{ij} = \int u \varphi_i \varphi_j.
\end{equation}
Note that $K_{\rob}$ is an $n+1\times n+1$ matrix and
$\boldsymbol{\psi}^{(i)} \in \R^{n+1}$.  Here, we take $U_{\mat}$ to
be $n+1\times n+1$, with $i,j=0,\ldots, n$.  Taking
$\mathbf{e}_i\in \R^n$,
\begin{equation}
  \boldsymbol{\psi}^{(i)}  = K_{\rob}^{-1} U_{\mat} I^{n+1,n} \mathbf{e}_i,
\end{equation}
where $I^{n+1,n} $ is an $n+1 \times n$ matrix with ones along the
main diagonal and zeros elsewhere.  Thus,
\begin{equation}
  \begin{split}
    \inner{T \varphi_i}{\varphi_j} =\langle{u
      \psi^{(i)}}{\varphi_j}\rangle&\approx \sum_{k=0}^n
    \inner{u\varphi_k}{\varphi_j}(K_{\rob}^{-1} U_{\mat} I^{n+1,n}
    \mathbf{e}_i)_{k}\\
    &\quad = (U_{\mat} K_{\rob}^{-1} U_{\mat} I^{n+1,n}
    \mathbf{e}_i)_j
  \end{split}
\end{equation}
Mapping back into the set of elements vanishing at $\rmax$, the weak
form of $T$ corresponds to the matrix
\begin{equation}
  \label{e:Tmat}
  T_{\mat} = I^{n,n+1} U_{\mat} K_{\rob}^{-1} U_{\mat} I^{n+1,n}
\end{equation}

Thus, the Galerkin FEM forms of the eigenvalue problems for $L_\pm$
are
\begin{subequations}
  \label{e:Lops}
  \begin{align}
    L_-: &&    (K_{\rm Dir} + E M_{\rm Dir} + U_{\mat} ){\bf v}  &= -\mu M_{\rm Dir}
                                                                   {\bf v},  \\
    L_+: &&    (K_{\rm Dir} + E M_{\rm Dir} + U_{\mat} - 2 T_{\mat}){\bf v}  &= -\mu M_{\rm Dir}
                                                                               {\bf v}.
  \end{align}
\end{subequations}
While it is intimidating to contend with the nonlocal operator, which,
in discretized form, induces a dense matrix, we found that this was
readily handled by SciPy, \cite{scipy}.

\subsubsection{Eigenvalues of $L_\pm$}

In Figure \ref{f:spec}, we plot the numerically computed negative
spectrum for the linearized operators.  We note here that as described
above for the full problem, the continuous spectrum for our linearized
operators starts at $E > 0$ and that infinitely many positive
eigenvalues of the operators exist in between $0$ and $E$ due to the
slow decay of the external potential.  We note that under the
assumptions that $L_+$ is invertible along the branch and using the
nodal count for $L_-$ from the linear solutions, one can show that the
number of negative eigenvalues for $L_+$ and $L_-$ does not change
from that in the case of the linear problem, which is once again
verified here numerically.  We also see that the topological structure
of the modes increases in a very similar fashion to that of the model
Hydrogen atom problem.  In the computed cases, for $E$ in excess of
the zero mass limit, $L_+$ of $u_j$ has $j+1$ negative eigenvalues,
while $L_-$ has $j$ eigenvalues.  Thus, we always obtained $2j +1$
negative eigenvalues.  This implies that the ground state is orbitally
stable, since $n(L) = p(d'')=1$.  However, it is inconclusive for the
excited states, since the difference between $n(L)$ and $p(d'')=1$ is
always a nonzero even number.  These were computed using the mesh
\eqref{e:nonumesh}, with $\rmax=100$ and $n=2000$.

\begin{figure}
  \subfigure{\includegraphics[width=6.25cm]{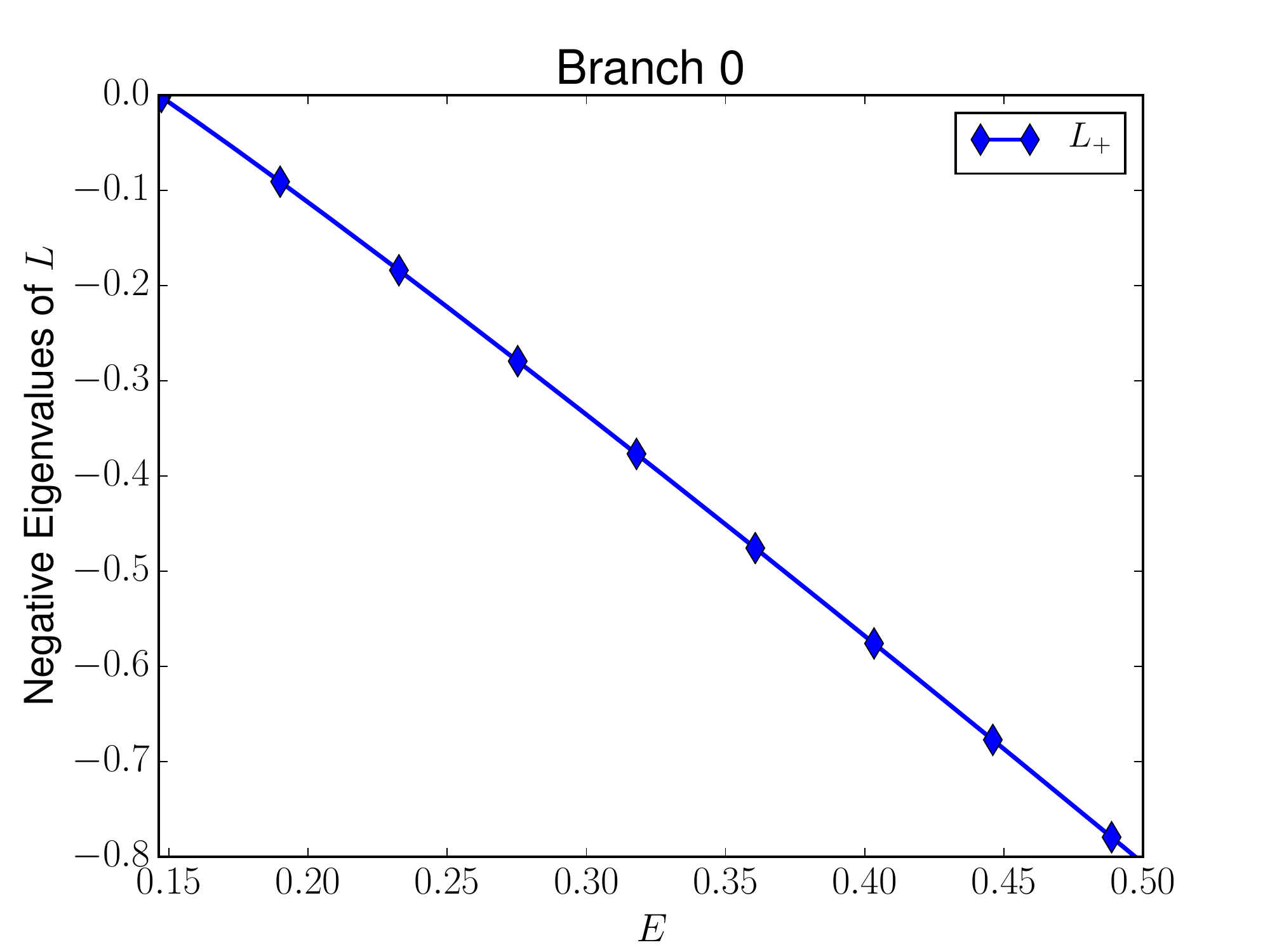}}
  \subfigure{\includegraphics[width=6.25cm]{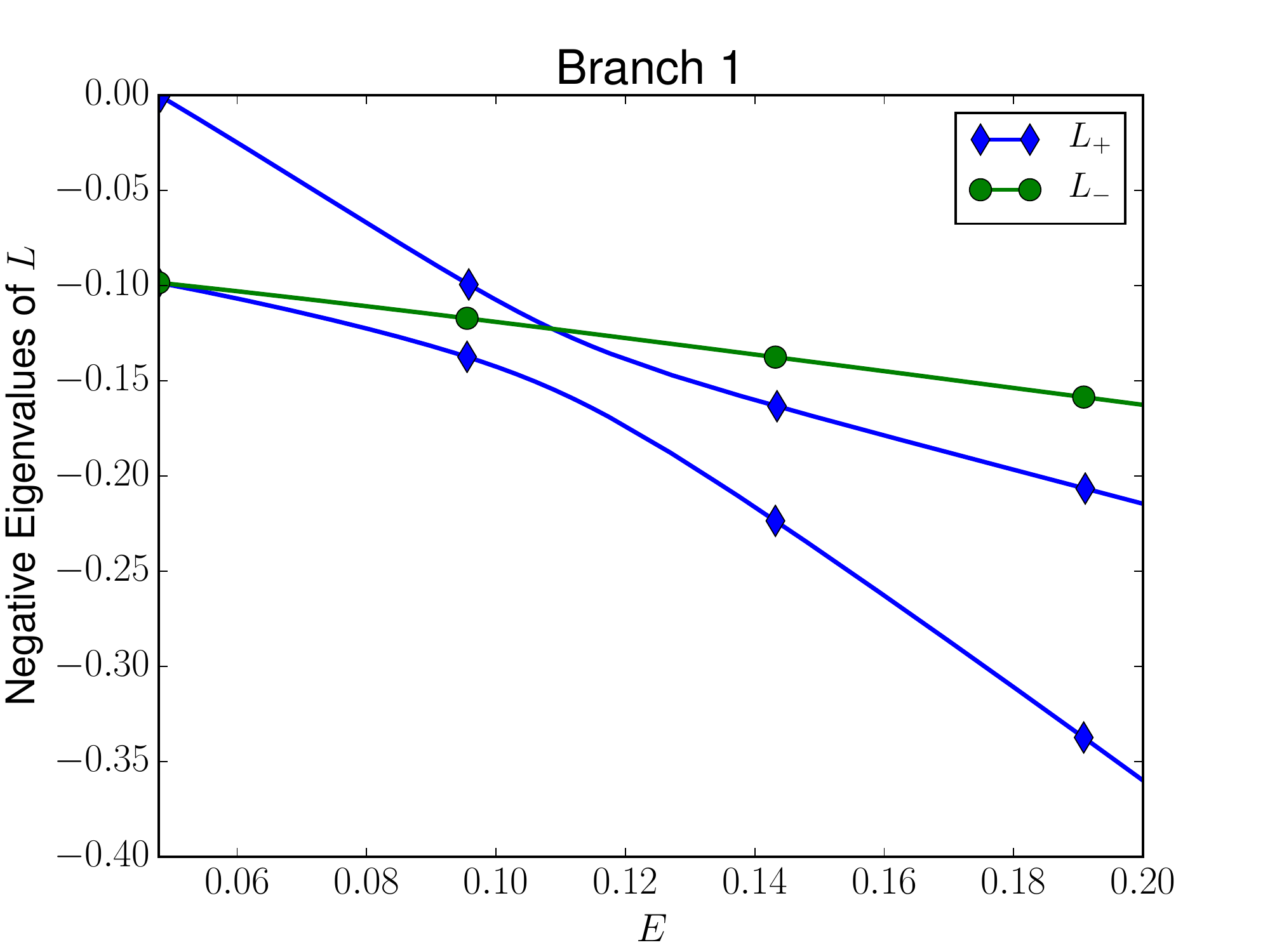}}

  \subfigure{\includegraphics[width=6.25cm]{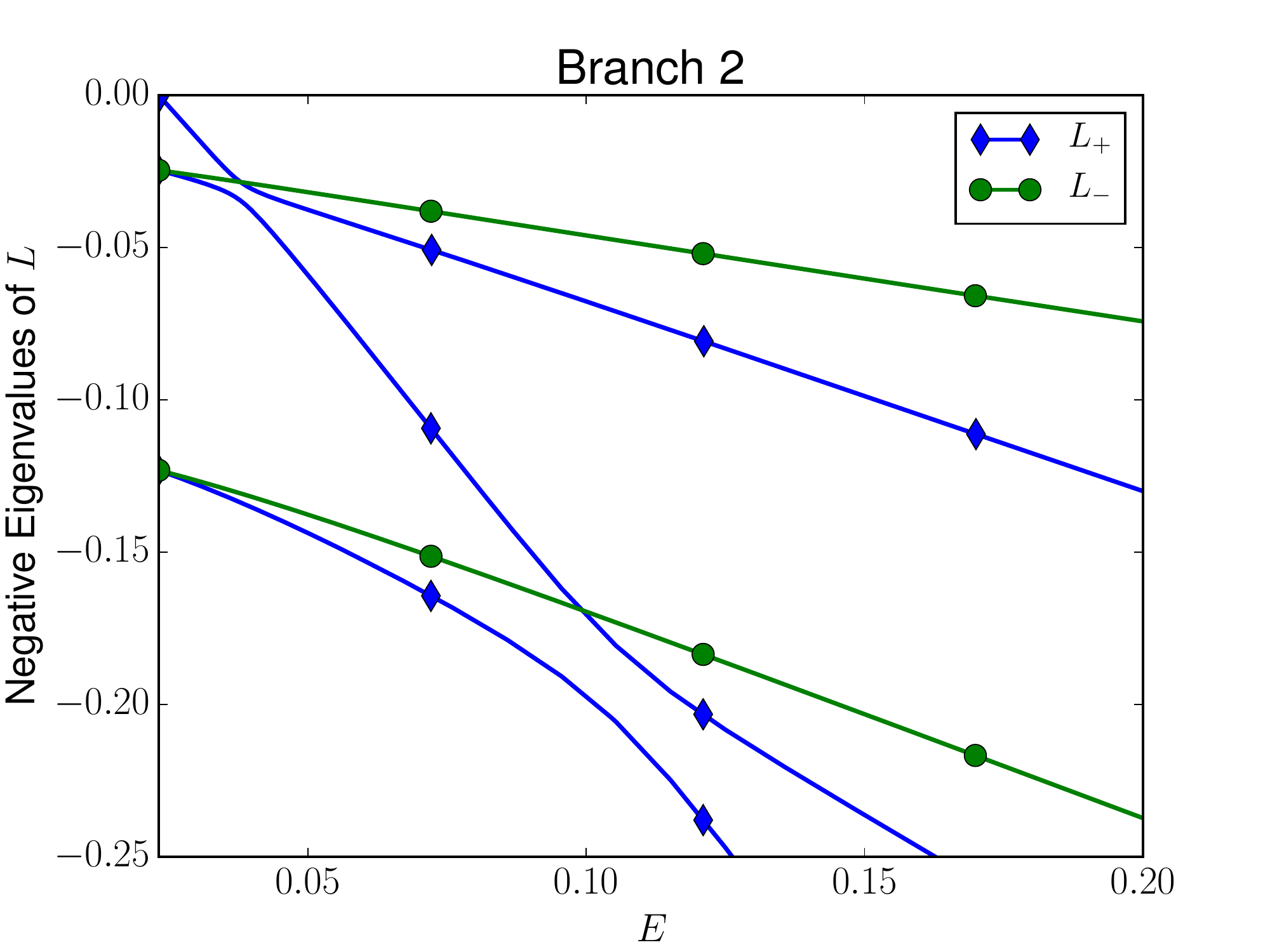}}
  \subfigure{\includegraphics[width=6.25cm]{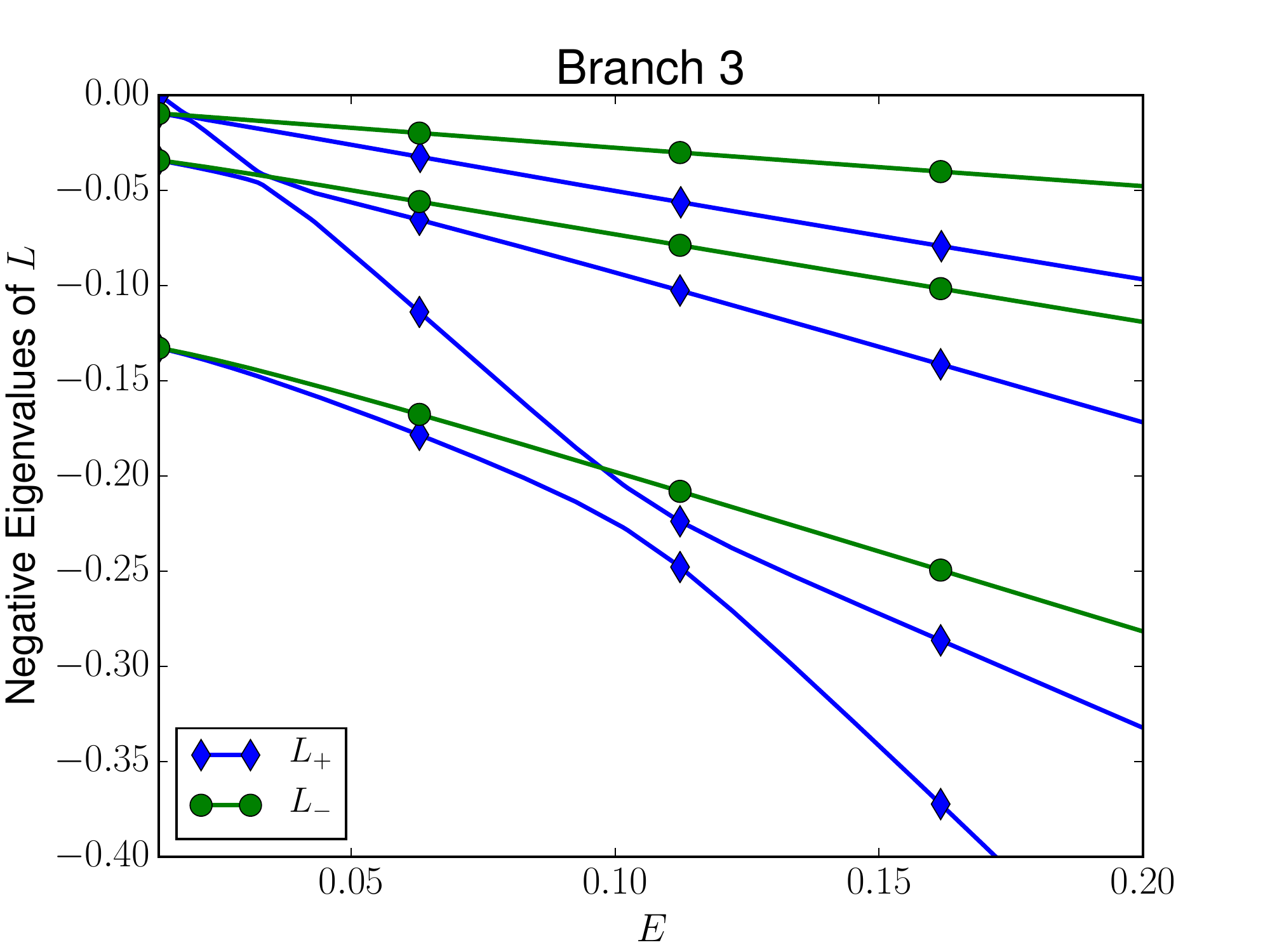}}

  \caption{Numerically computed negative spectrum of $L_\pm$ for the
    ground state and the first three excited states. Notice the
    crossings of the spectral lines in amongst the excited states.
    These were computed on \eqref{e:nonumesh} with $\rmax=100$ and
    $n=2000$.}
  \label{f:spec}
\end{figure}

\begin{remark}
  Our key assumption is that the kernel of $L_+$ remains trivial along
  the branches we have constructed.  This held in our numerical
  computations.
\end{remark}

Since this is inconclusive, we instead discretize $JL$ directly, and
examine its spectrum.  For the first few states, this is plotted in
Figure \ref{f:specJL} for $E=1$ solutions.  While the ground state has
no linearly unstable states, each of the excited states has some
number of linearly unstable modes, through the appearance of the
quartets of point spectra.  These were computed using the mesh
\eqref{e:nonumesh}, with $\rmax=100$ and $n=8000$.  While we were able
to use the automatically obtained mesh for computing the spectrum of
just $L_\pm$, this resulted in spurious purely real eigenvalues which
converged to the origin under mesh refinement.  Indeed, in the case of
the third branch, while not entirely visible, there is a pair of real
eigenvalues with magnitude $\bigo(10^{-4})$.  We believe these will
tend to zero under further mesh refinement, which we were unable to
do.

\begin{figure}
  \subfigure{\includegraphics[width=6.25cm]{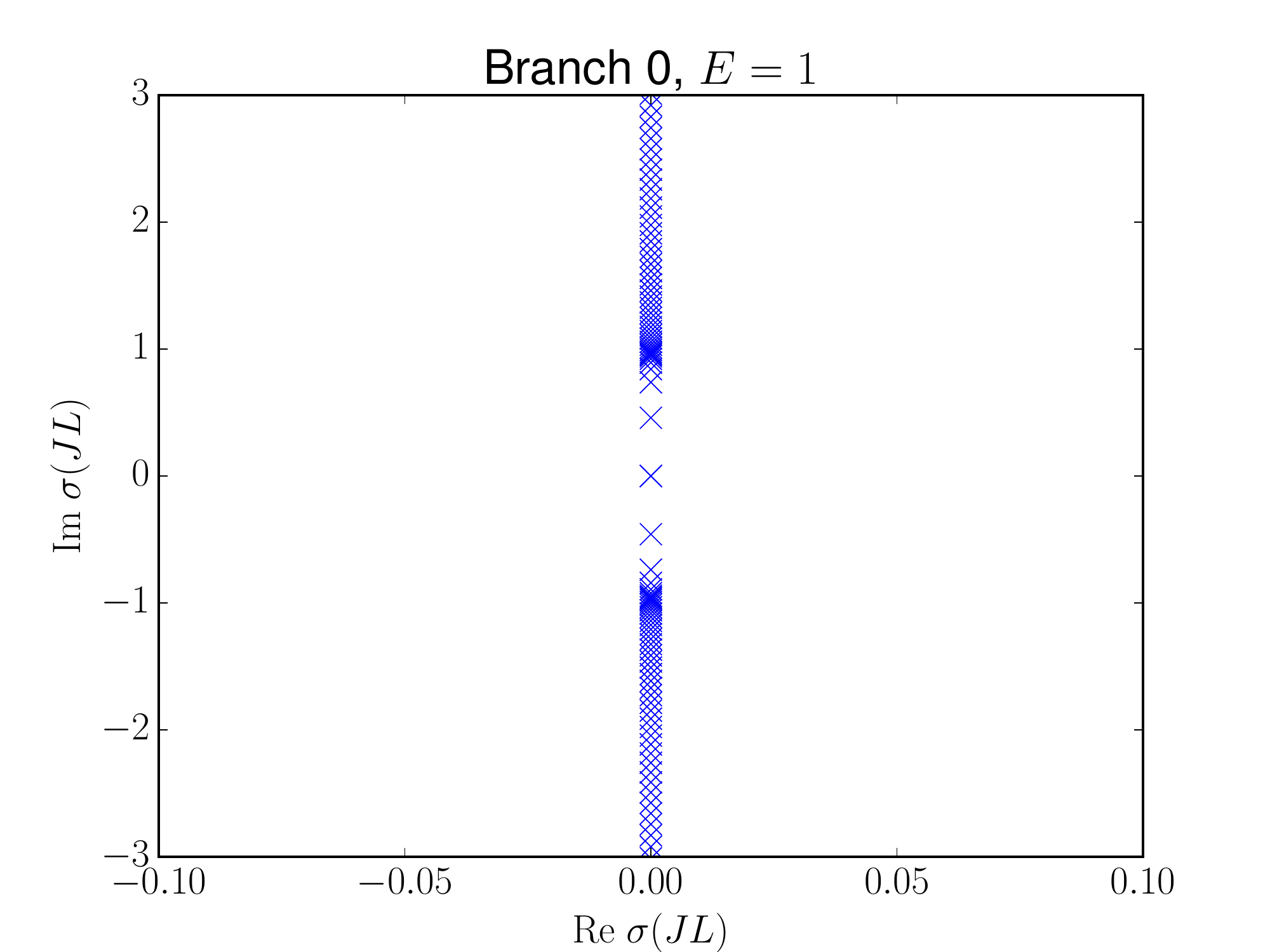}}
  \subfigure{\includegraphics[width=6.25cm]{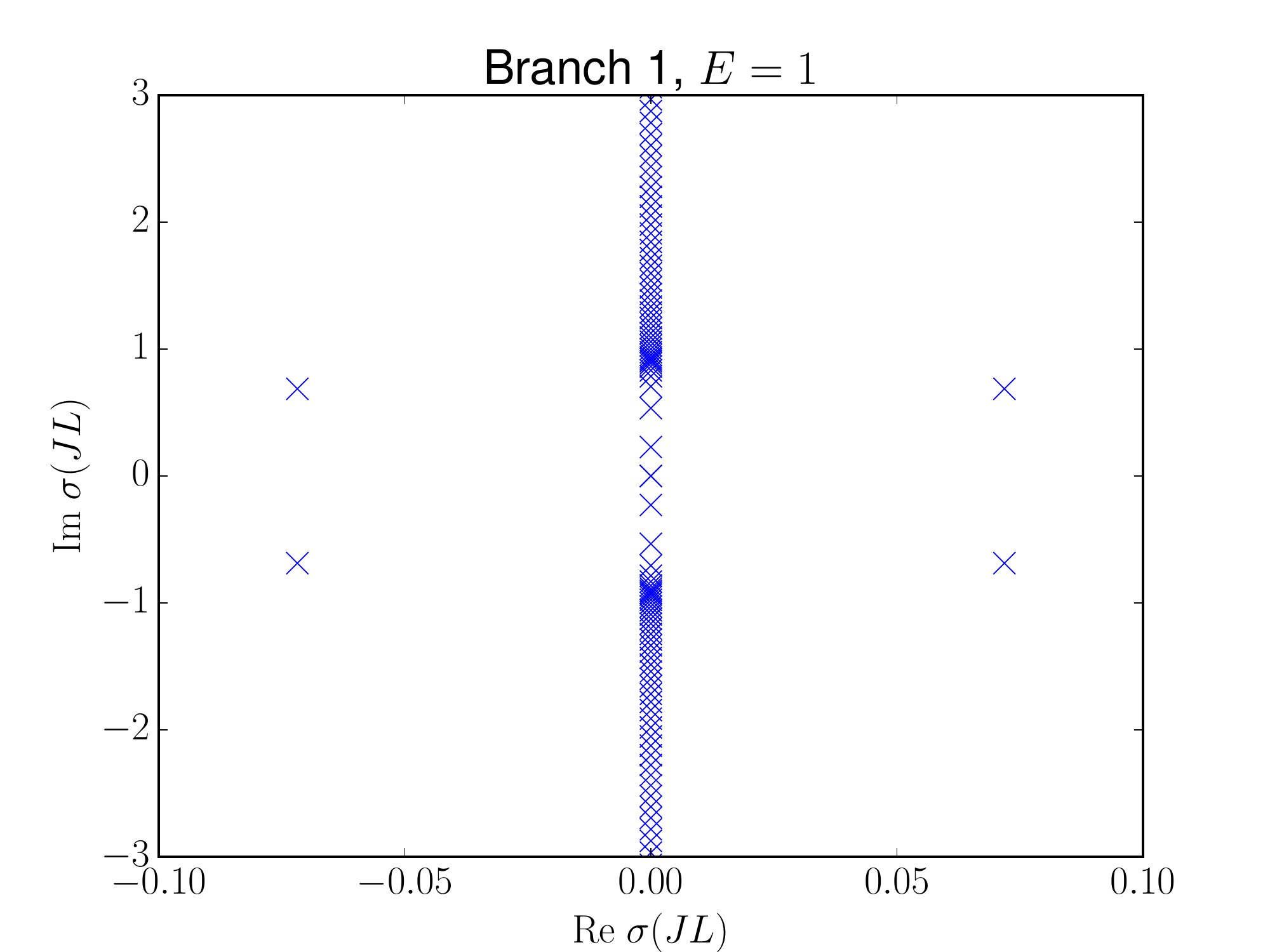}}

  \subfigure{\includegraphics[width=6.25cm]{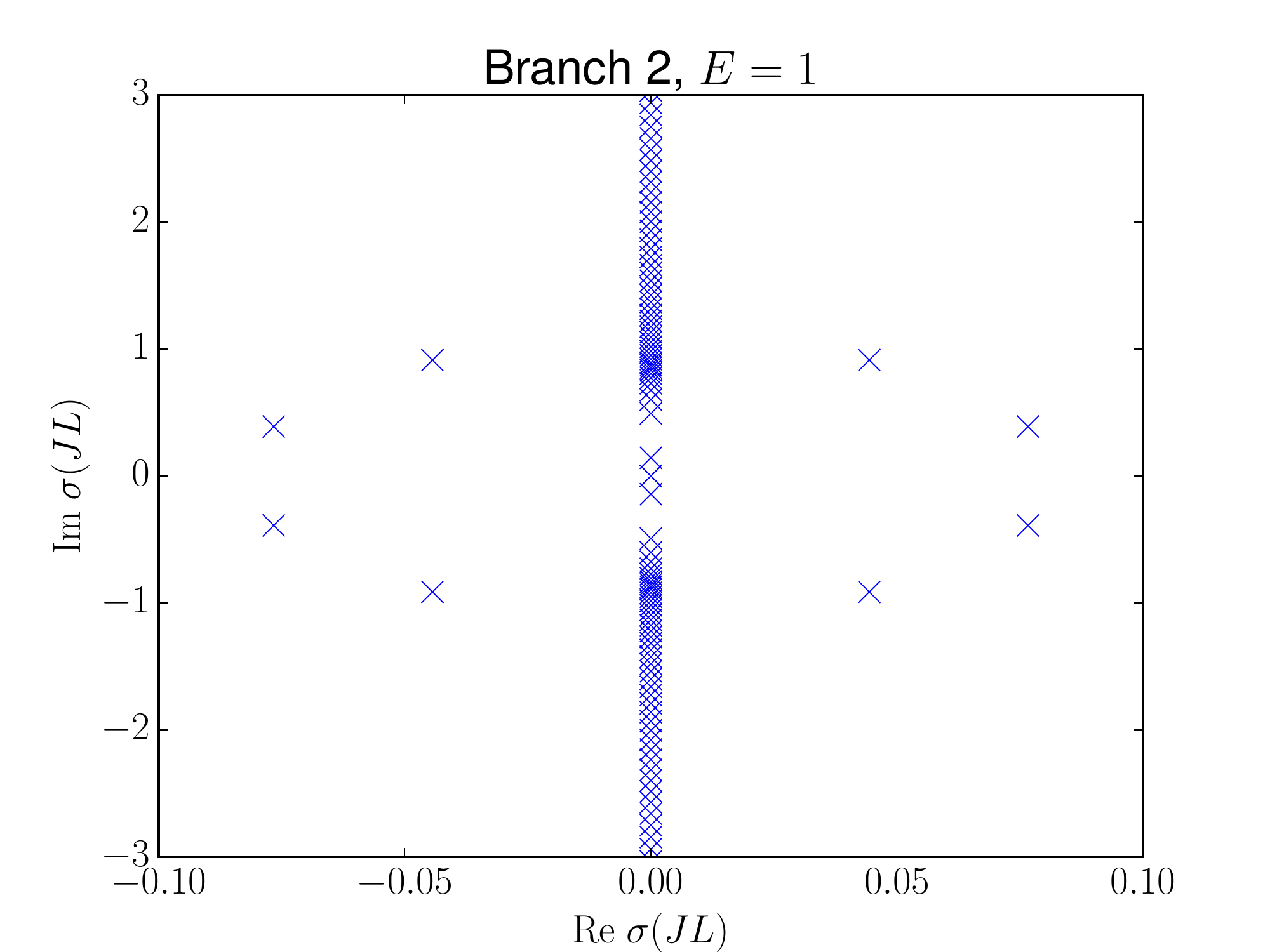}}
  \subfigure{\includegraphics[width=6.25cm]{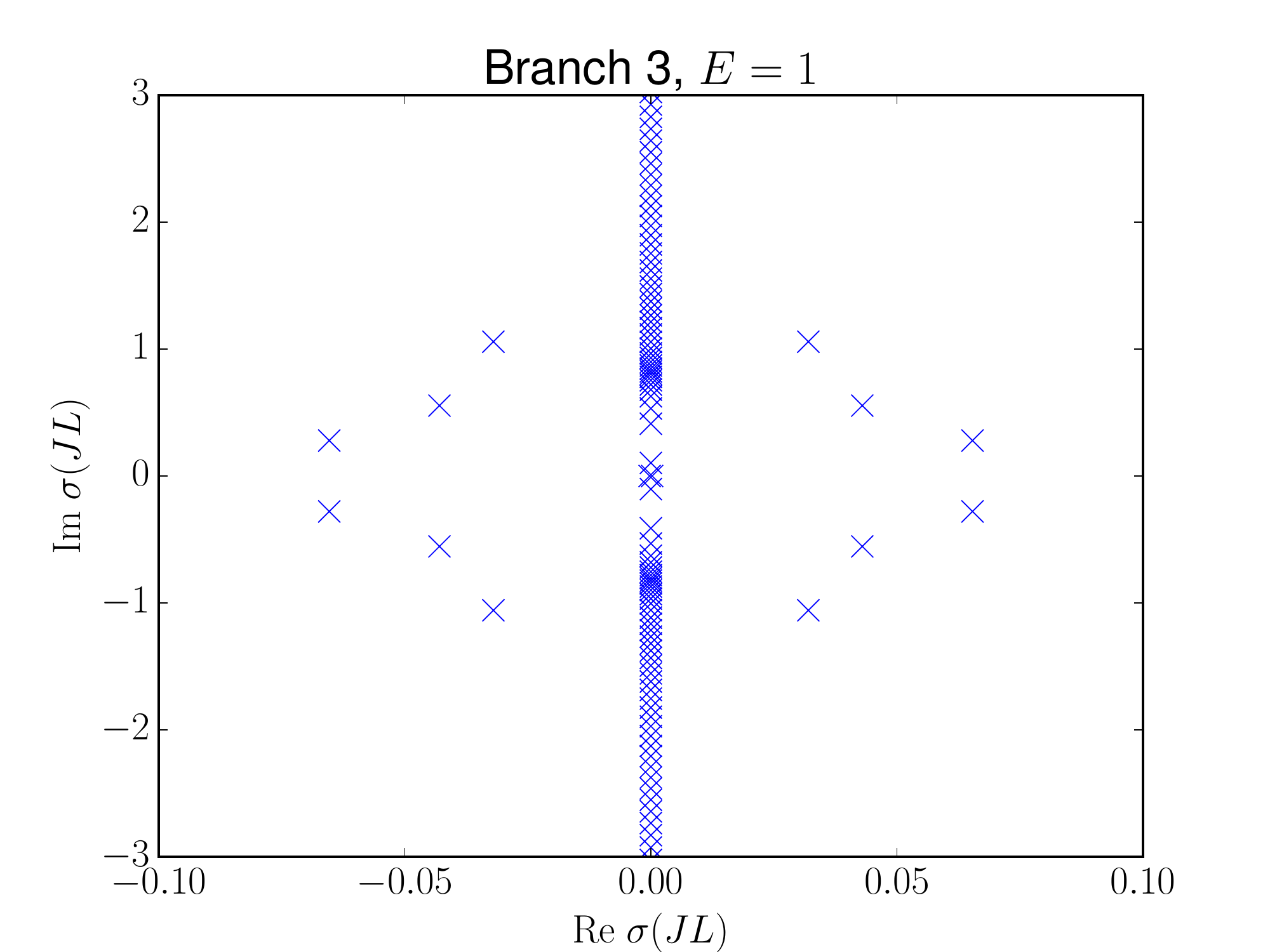}}
  \caption{Numerically computed spectrum for $JL$ for the ground state
    and the first three excited states.  Note the appearance of the
    quartets for the excited states implying linear instability.
    These were computed on \eqref{e:nonumesh} with $\rmax=100$ and
    $n=8000$.}
  \label{f:specJL}
\end{figure}

\subsubsection{Time-Dependent Simulations}

To assess the stability of the nonlinear bound states, we resort to
direct numerical simulation of
\begin{equation}
  \label{e:time_dep}
  i\partial_t \phi = - \Delta \phi + V(|x|) \phi - (-\Delta)^{-1}(\abs{\phi}^2) \phi
\end{equation}
using perturbations of the solutions we computed in the previous
section as initial conditions.  Indeed, our data is of the form
\begin{equation}
  \label{e:ic1}
  \phi_0 = u_j(r) + \epsilon \exp\left\{-4(r-10)^2\right\}
\end{equation}
with $\epsilon =10^{-4}$ and $u_j$ the $E=1$ solution of the $j$-th
branch.  We focus on the $E=1$ solutions, as these are highly
localized, decaying $\propto e^{-r}$.  Our results, pictured in Figure
\ref{f:timedep1} show that while the ground state appears to be
stable, the first excited state is unstable; this is consistent with
our spectral computations.  Throughout, we restrict to the radially
symmetric problem, and solve the initial boundary value problem
associated with \eqref{e:time_dep} on $(0, \rmax)$, with boundary
conditions
\begin{equation}
  \label{e:time_dep_bcs}
  \partial_r\phi(0,t) = 0, \quad\phi(\rmax,t) = 0.
\end{equation}
We made use of the mesh \eqref{e:nonumesh}.

\begin{figure}
  \subfigure{\includegraphics[width=6.25cm]{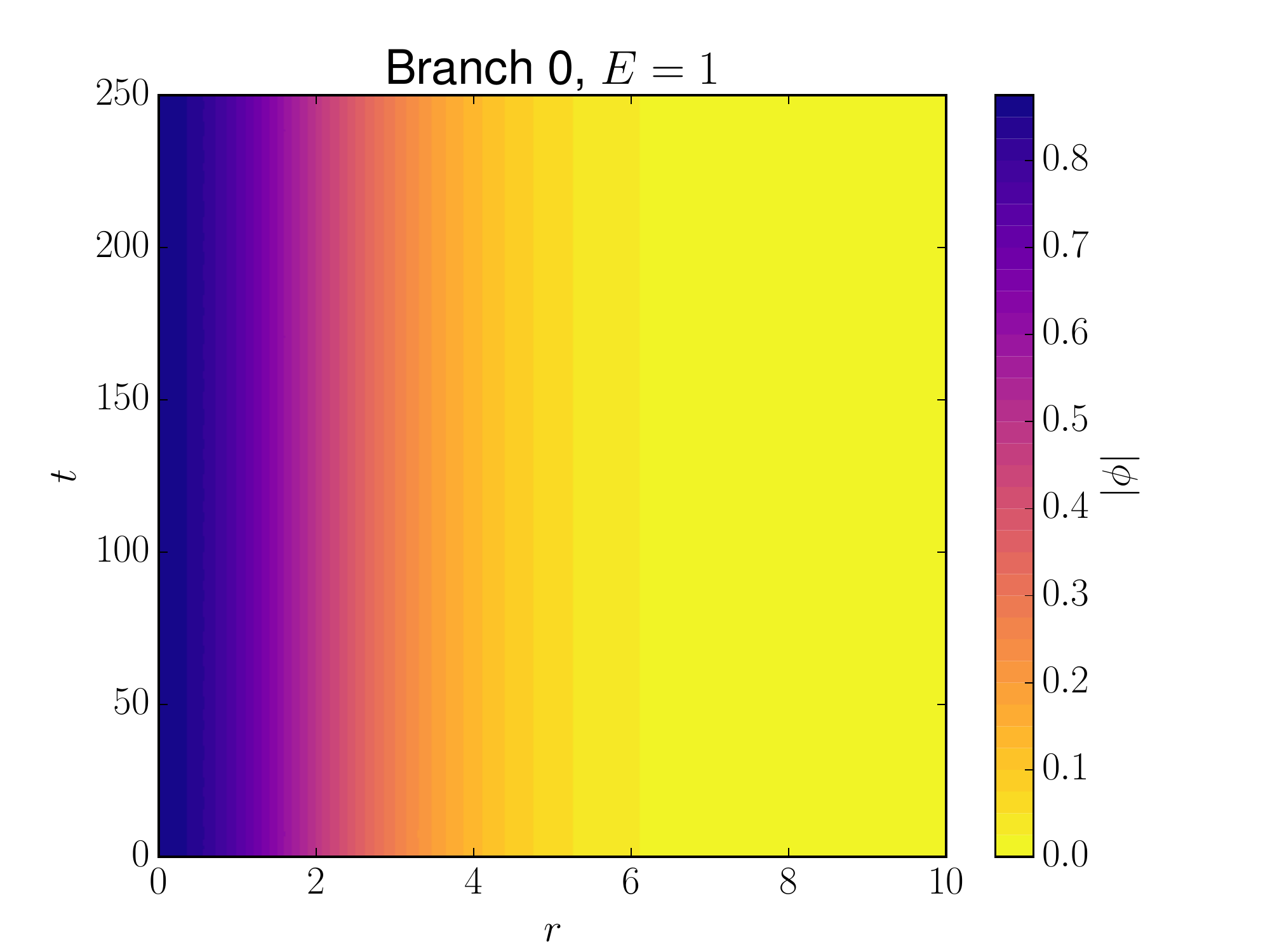}}
  \subfigure{\includegraphics[width=6.25cm]{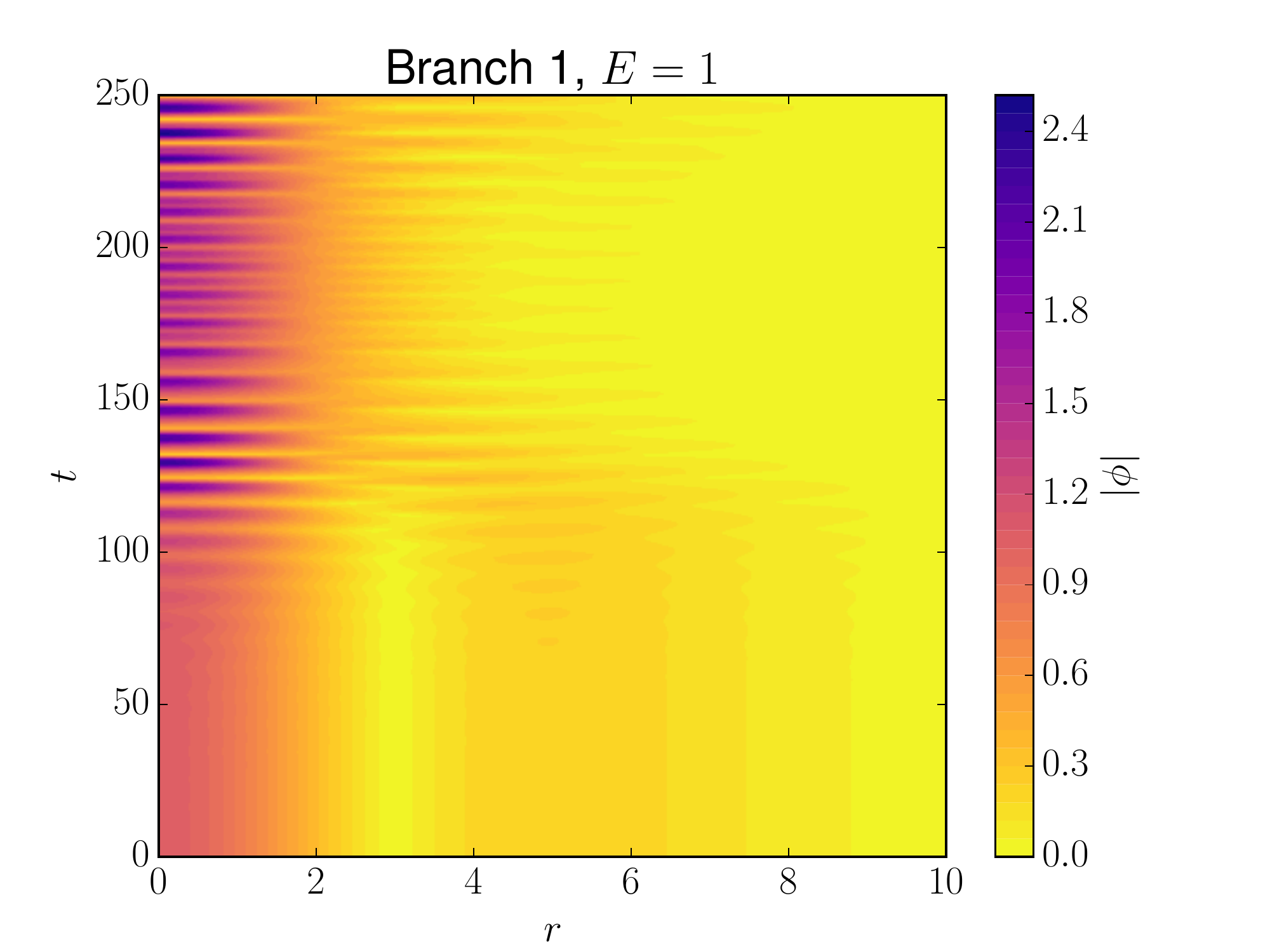}}
  \caption{Time dependent simulations of the ground state and first
    excited state solutions with $E=1$, with perturbed initial
    condition \eqref{e:ic1}.}
  \label{f:timedep1}
\end{figure}

Our algorithm is based on the Strang splitting method in
\cite{Lubich:2008hd}.  We solve \eqref{e:time_dep} through three
successive problems.  Given the solution at $t_n$, $\phi^{(n)}$,
\begin{align}
  \label{e:step1}
  \phi' &= \exp\set{\tfrac{i\Delta t}{2} (-\Delta)^{-1} |\phi^{{(n)}}|^2}\phi^{{(n)}}\\
  \label{e:step2}
  \phi'' & = \bracket{I  + \tfrac{i \Delta t}{2}(-\Delta+V) }^{-1}\bracket{I  - \tfrac{i \Delta t}{2}(-\Delta+V)}\phi'\\
  \label{e:step3}
  \phi^{(n+1)} &= \exp\set{\tfrac{i \Delta t}{2} (-\Delta)^{-1} |\phi'' |^2 }\phi''
\end{align}
Problem \eqref{e:step1} is accomplished by first solving
\begin{equation}
  -\Delta w= |\phi^{{(n)}}|^2, \quad w'(0) = 0, \quad \lim_{ r\to \infty} w(r) = 0
\end{equation}
on $(0, \rmax)$ with radial piecewise linear finite elements and the
Robin condition
\begin{equation*}
  w'(\rmax) + \tfrac{1}{\rmax}w(\rmax) = 0.
\end{equation*}
The FEM solution can be represented as
\begin{equation*} {\bf w} = I^{n,n+1}K_{\rob}^{-1} M
  I^{n+1,}|\boldsymbol{\phi}^{(n)}|^{2},
\end{equation*}
since $K_{\rob}$ is an $n+1\times n+1$ matrix, due to the Robin
condition, but our solution must satisfy the Dirichlet condition at
$\rmax$.  As before, $M$ is the mass matrix, and $K_{\rob}$ is the
stiffness matrix with Robin conditions.  The nonlinearity is
interpreted as an element-wise operation at the nodes.  Once we have
computed ${\bf w}$, we have
\begin{equation*}
  \boldsymbol{\phi}' = \exp\set{\tfrac{i\Delta t}{2} {\bf w}} \boldsymbol{\phi}^{(n)},
\end{equation*}
where, again, the operation is element-wise on the nodes.  Then,
\eqref{e:step2} is obtained from
\begin{equation}
  \boldsymbol{\phi}'' = \bracket{M + \tfrac{i \Delta t}{2}  K_{\dir} + U_{\mat}}^{-1}\bracket{M -\tfrac{i \Delta t}{2}  K_{\dir} + U_{\mat}} \boldsymbol{\phi}'.
\end{equation}
Finally \eqref{e:step3} is computed in the same way as
\eqref{e:step1}.  This method is efficient, as only sparse linear
algebra operations are required, and accurate.  The results shown in
Figure \ref{f:timedep1} were obtained on the nonuniform mesh
\eqref{e:nonumesh} with $\rmax = 4000$, $\Delta t = 0.00125$ and
$n=64000$.  They were stable to mesh refinement and other diagnostics.

To assess the accuracy of our simulations, we first examined the
conservation of the invariants, numerically approximated by
\begin{align}
  \mathcal{M}(\boldsymbol{\phi})  &= \boldsymbol{\phi}^T M
                                    \boldsymbol{\phi}  , \\
  \mathcal{H}(\boldsymbol{\phi})& = \boldsymbol{\phi}^T (K_{\dir} + U_{\mat}) \boldsymbol{\phi} -
                                  \tfrac{1}{2}(M|\boldsymbol{\phi}|^2)^T K_{\rob}^{-1}(M|\boldsymbol{\phi}|^2) .
\end{align}

The results for the simulations corresponding to Figure
\ref{f:timedep1} are shown in Figure \ref{f:conservation}.  While the
conservation of the ground state is excellent, there is a somewhat
larger discrepancy with the first excited state, although the relative
error over the lifetime of the simulation is still $\bigo(10^{-6})$.
To verify that there was no error, we performed convergence testing,
shown in Figure \ref{f:invconv}, indicating that the algorithm is
converging under mesh refinement, and further accuracy could be gained
with a reduction in $\Delta t$.

\begin{figure}
  \subfigure[Ground
  State]{\includegraphics[width=6.25cm]{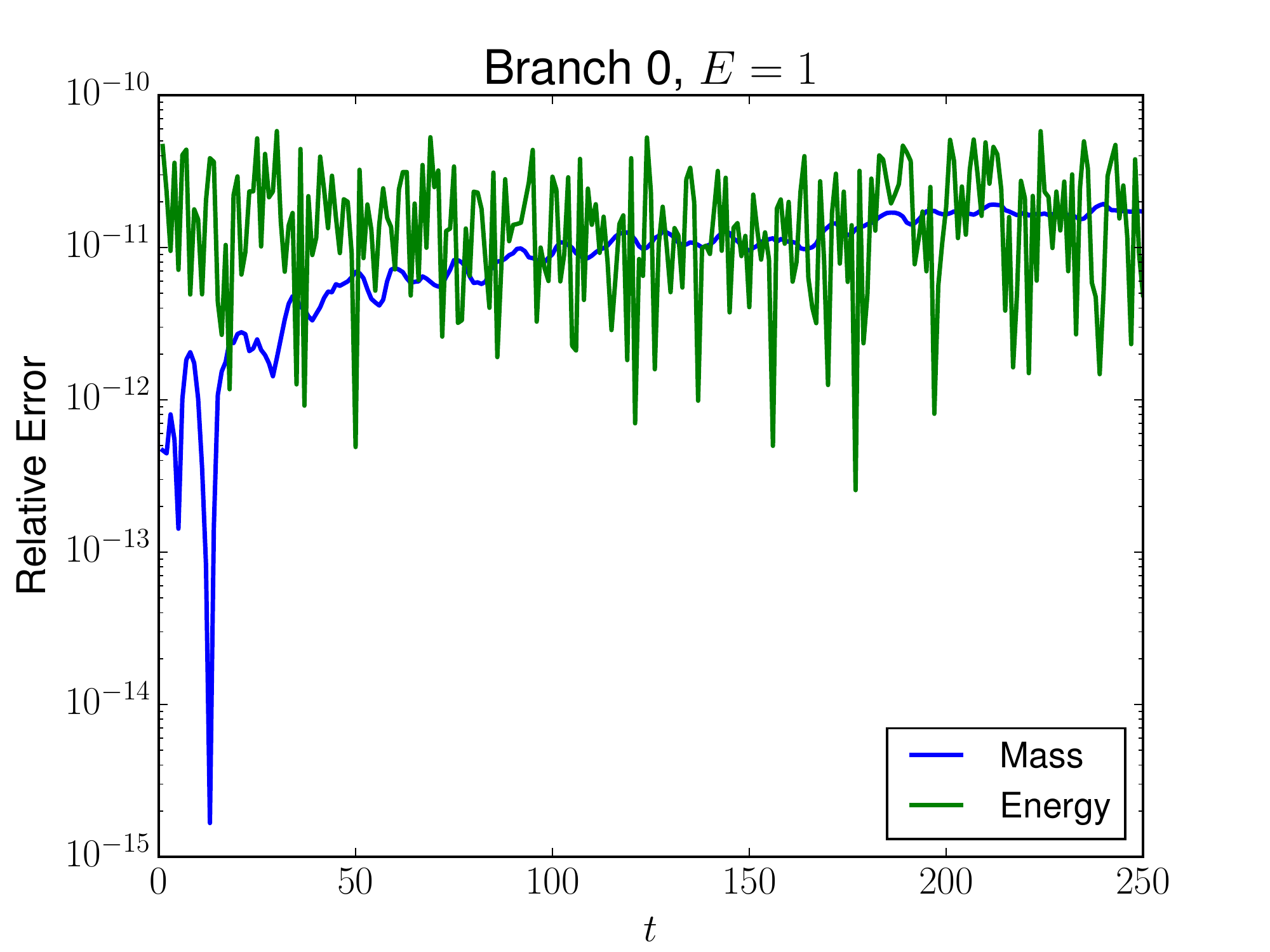}}
  \subfigure[First Excited
  State]{\includegraphics[width=6.25cm]{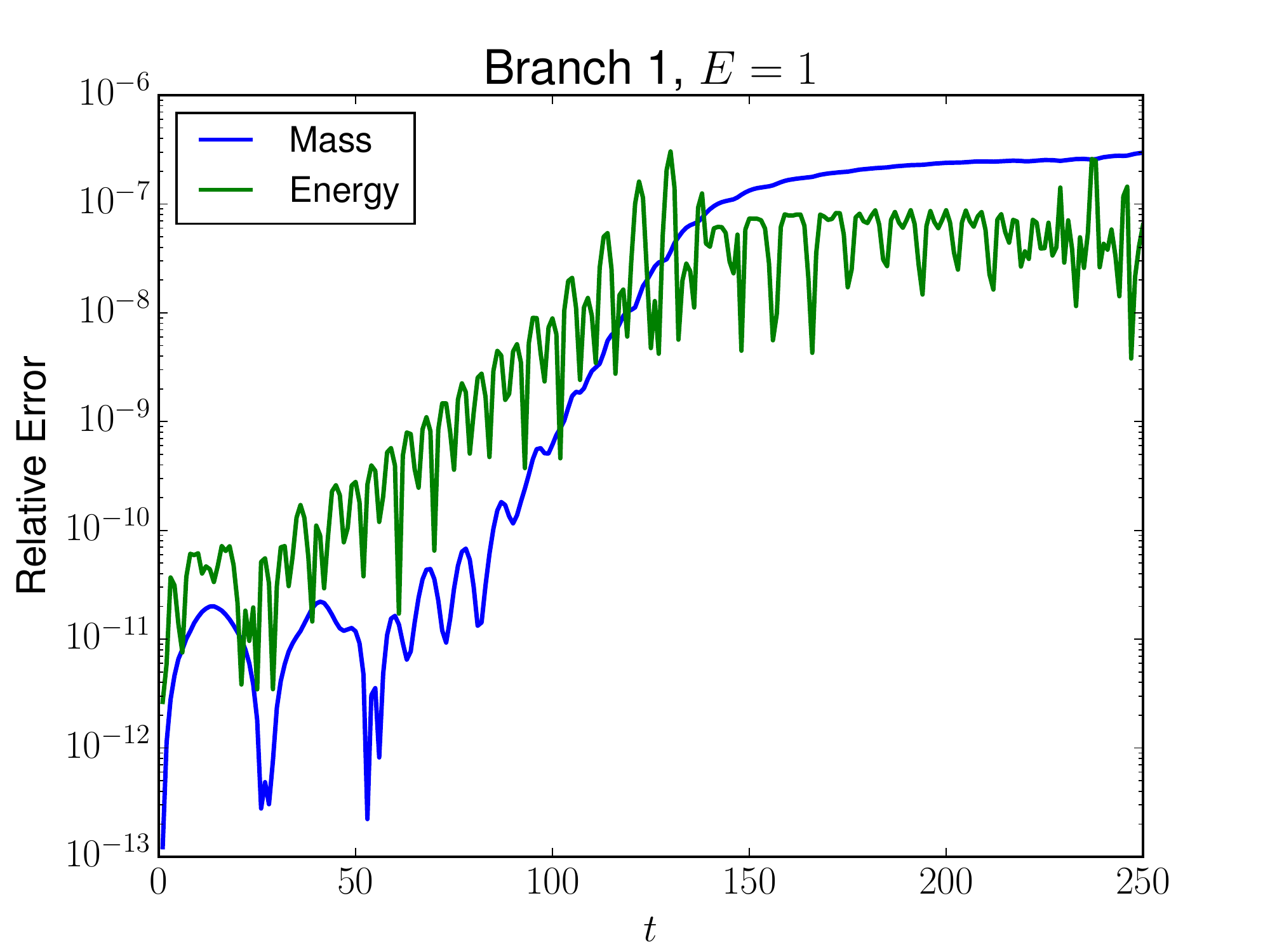}}
  \caption{Conservation of the numerical invariants under during the
    simulations.  These are representative of our results and
    correspond to Figure \ref{f:timedep1}.}
  \label{f:conservation}
\end{figure}

\begin{figure}
  \subfigure[]{\includegraphics[width=6.25cm]{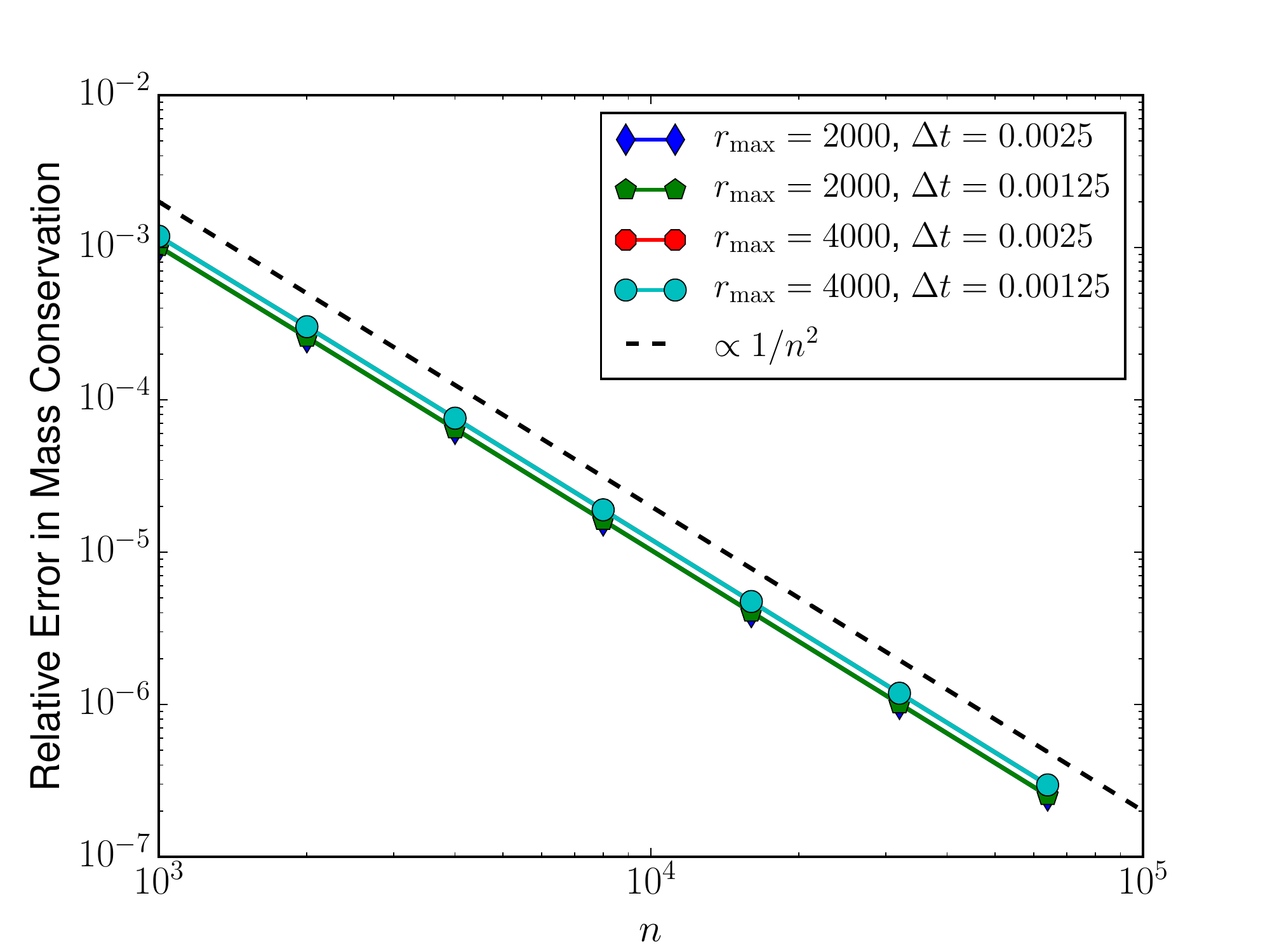}}
  \subfigure[]{\includegraphics[width=6.25cm]{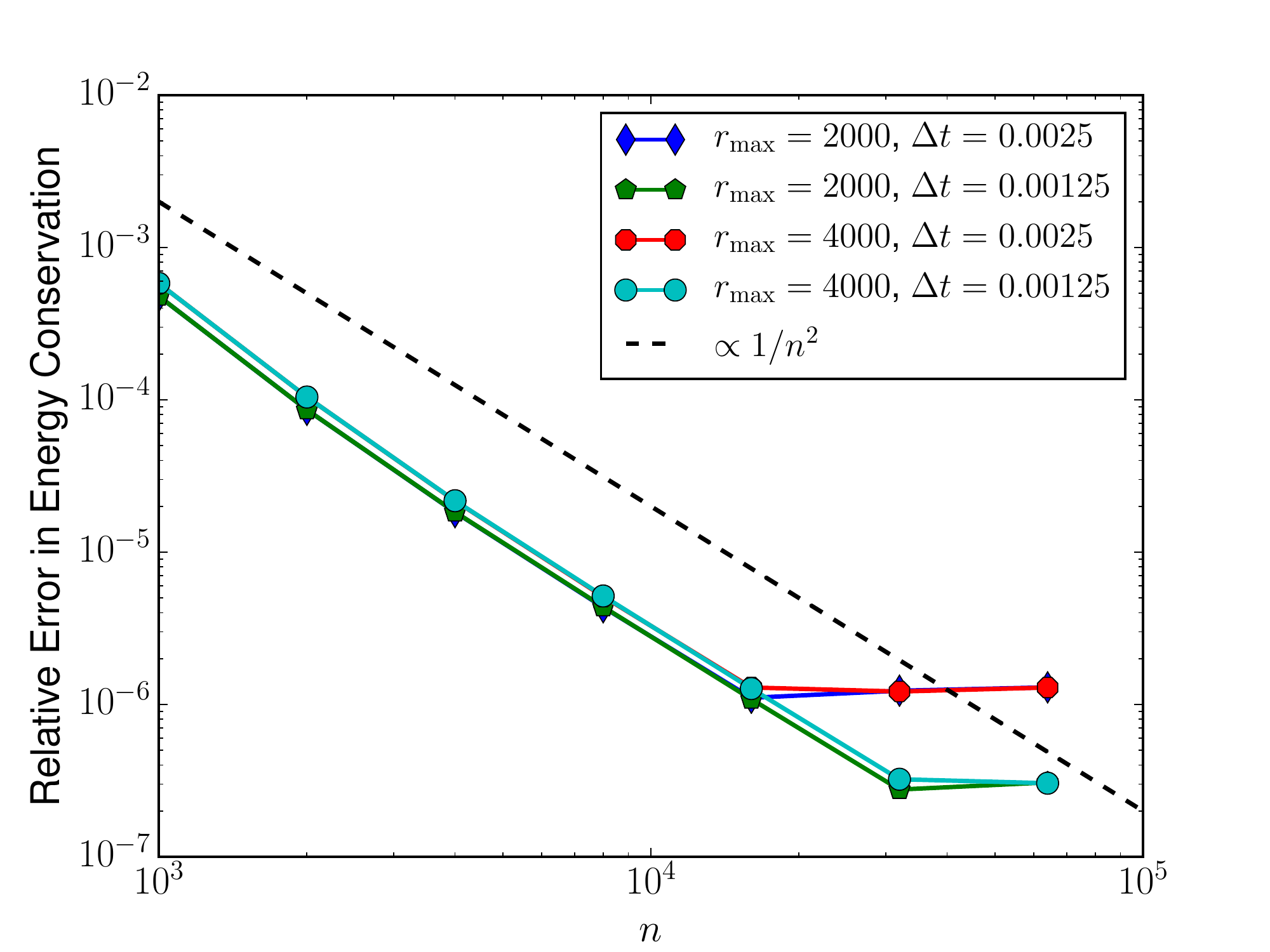}}
  \caption{Convergence of the invariants for the time dependent
    simulations of the perturbed first excited state.}
  \label{f:invconv}
\end{figure}

The only other comment we make on our methodology is that $\rmax$ must
be sufficiently large to allow for the homogeneous Dirichlet condition
at $\rmax$ in \eqref{e:time_dep_bcs}.  While we used $\rmax=100$ to
compute the nonlinear bound states, for the time-dependent problems we
took $\rmax = 2000$ and $\rmax=4000$.  We thus matched the computed
$u$ on $[0,100]$ to the far field asymptotics,
\begin{equation}
  \label{e:matched}
  u\sim K e^{-\sqrt{E} r} r^{-1 +\frac{1 + m(100)}{2 \sqrt{E}}},
\end{equation}
to generate an initial condition.

In the time dependent simulations, the solution was typically smaller
than $\bigo(10^{-8})$ at the boundary throughout the simulation when
$\rmax=2000$ and smaller than $\bigo(10^{-15})$ when $\rmax =4000$.
This, together with other convergence testing in time step, domain
size, and mesh spacing leads us to believe that the stability and
instability results we have observed are genuine and not numerical.
They are also consistent with simulations appearing in
\cite{harrison2002numerical}, where the authors examined
\eqref{SchPoi} in the setting $V=0$.  There, they found the ground
state to be stable and the first excited state to be unstable,
agreeing with our simulations at $E=1$.

\subsection{Transitions to Instability}

While for $E=1$, the $JL$ spectral computations and time dependent
simulations reveal linear instabilities of the excited state branches,
the stability question for general $E$ remains unresolved.  As Figure
\ref{f:transition1} shows, there may be some stable excited state
solutions.  In the figure, we have plotted the maximum of the real
part of the $JL$ spectrum for a series of branch 1 excited
states. These were computed using the nonuniform mesh with
$\rmax = 100$ and $n=2000$.  Examining the figure, there appears to be
a secondary bifurcation near $E=0.13$, where a linearly unstable
eigenvalue first appears.  Beneath that value, no such linear
instability is present.  With regard to bounds on unstable
eigenvalues, we calculate in Appendix \ref{a:bds} that
\begin{equation}
  \sup \abs{\Re \sigma(JL)}\leq C \|u_{E}\|_{L^2}^2 \sqrt{\frac{1}{3}E +
    \frac{1}{3} \|V\|_{L^\infty} + \frac{2}{3}\|x\nabla V\|_{L^\infty}},
\end{equation}
ensuring that unstable eigenvalues must vanish in the zero mass limit.


\begin{figure}
  \includegraphics[width=6.25cm]{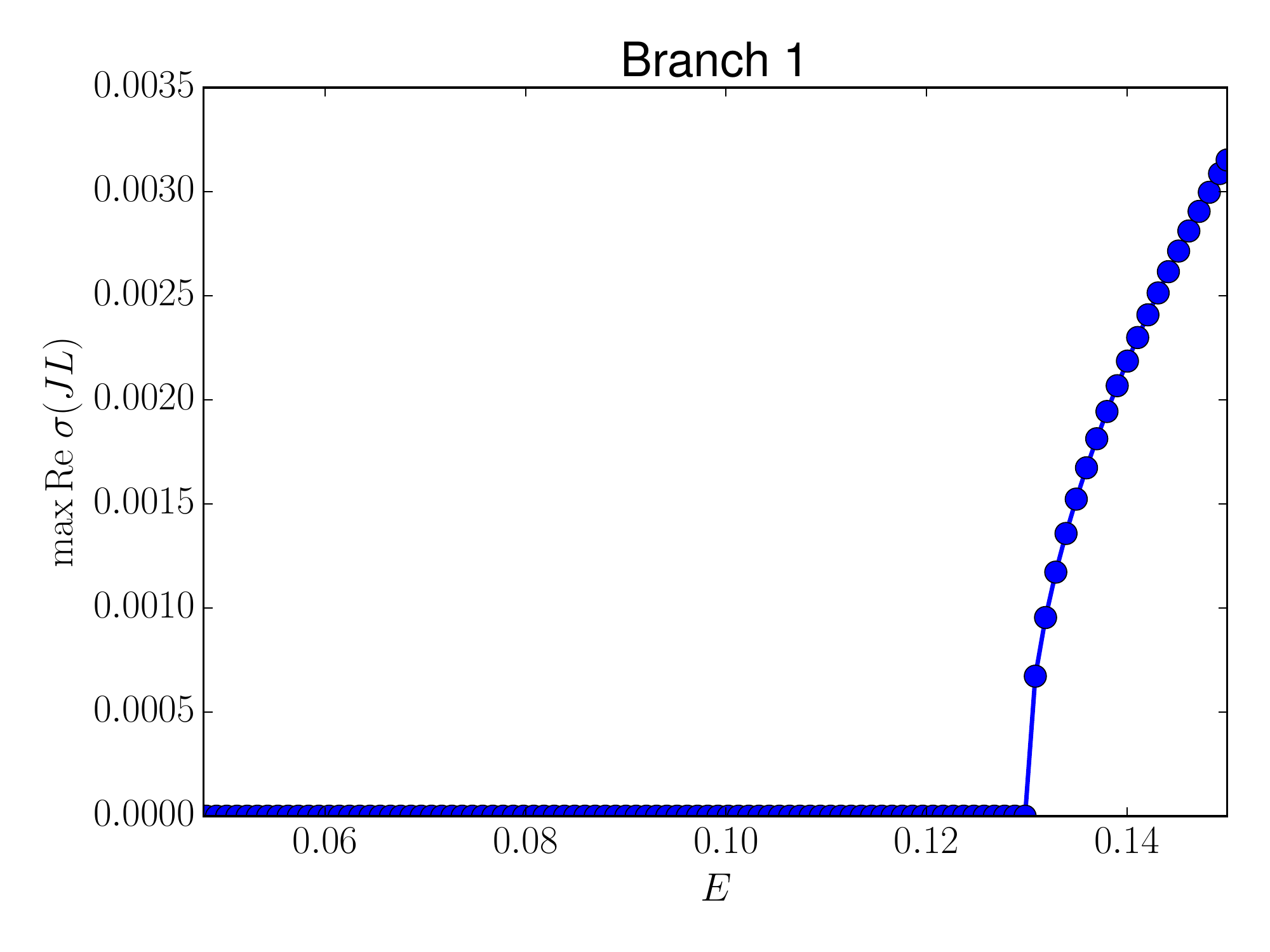}
  \caption{Numerically computed spectrum for $JL$ for the first
    excited state over a range of $E$ values, from the mass zero
    bifurcation value up to 0.15.  Note the secondary bifurcation near
    $E=0.13$, where unstable eigenvalues appear.  These correspond to
    quartets. when plotted in the complex plane.  Computed on
    \eqref{e:nonumesh} with $\rmax=100$ and $n=2000$.}
  \label{f:transition1}
\end{figure}

\section{Discussion}
\label{s:disc}

We have analytically and numerically explored radial nonlinear bound
state solutions of the Schr\"odinger-Poisson equation with an
attractive Coulomb like potential.  These states were shown to branch
off of the discrete modes of the associated linear problem.  Subject
to a spectral assumption, these can be continued to have arbitrarily
large mass and large $E$ parameter.

Our numerical methods for computing the solutions, first computing the
linear modes, and then performing continuation in an artificial
parameter, was robust.  Subsequent time dependent simulations, using a
FEM discretization and a splitting scheme, also proved themselves to
be robust, showing excellent conservation of the invariants.

While our work implies the stability of the ground state at all values
of $E$, we are unable to make any broad conclusions about the excited
states.  Our spectral and time dependent computations for the $E=1$
solutions imply they are linearly unstable.  At the same time, in our
examination of the $JL$ spectrum near the zero mass limit for branch
1, we see a subsequent bifurcation with the emergence of linearly
unstable modes.  Further investigation is necessary to explore the
other states, and a spectral approach, such as the one used in
\cite{harrison2002numerical} for $V=0$, may be of use.  We also note
the recent work \cite{cuccagna2015orbital}, in which it is shown that
the spectrally stable excited states can still be nonlinearly unstable
through radiation damping and the Fermi Golden Rule, which require
dispersive decay that is not understood for long range, Coulomb-style
potentials.

In most applications, excited states can be both linearly unstable, as
observed here for large enough $E$, as well as orbitally unstable due
to radiation damping.  For long range potentials of the case studied
here, it is unknown how to quantify radiation effects.  See, for
instance, the work \cite{tsai2002relaxation} for a discussion of
resonant interactions and \cite{KKP,KKSW,pelinovsky2012normal} in the
setting of bifurcation theory for the excited state of a localized
double well potential.

The promise of stable excited states in our setting is relevant given
the observations in \cite{Bray}, that excess energy contained in a
dwarf spheroidal galaxy corresponds to larger essential support of the
dark matter field. By essential support here, we mean the volume of
space on which the solution has non-trivial mass.  This is
contradictory to the nature of scaling of the ground state, which
contracts its effective support as mass is increased in our model.
Our numerics imply that for sufficiently large $E$, excited state
branches are unstable.  More refined numerical analysis and theory is
required to determine if the conjecture of \cite{Bray} about stability
of excited state branches holds neared the bifurcation points.  This
merits future analytical and computational study.




\appendix

\section{Bounds on Unstable Eigenvalues}
\label{a:bds}

To bound the positive real part of the eigenvalues of $JL$, we follow
the results from \cite{tod2001ground} and \cite[Appendix
A]{harrison2002numerical}.  First, we express the eigenvlaue problem
as
\begin{align}
  L_-v &= \lambda u,\\
  L_+ u & = - \lambda v,
\end{align}
with
\begin{equation}
  L_+ = L_- - 2 |x|^{-1} \ast(u_E \bullet) u_E.
\end{equation}
Therefore
\begin{equation}
  \int \bar u L_- v - \int v L_+ \bar u = \int \lambda |u|^2 + \int \bar \lambda \bar{|v|^2}
\end{equation}
Writing $\lambda = \sigma + i \tau$, and using the self adjointness of
$L_-$,
\begin{equation}
  \sigma(\|u\|_{L^2}^2 + \|v\|_{L^2}^2) + i \tau (\|u\|_{L^2}^2 -
  \|v\|_{L^2}^2) = \int 2 |x|^{-1}(u_E \bar u) u_E v
\end{equation}
Therefore, taking real parts and then absolute values,
\begin{equation}
  \begin{split}
    |\sigma|(\|u\|_{L^2}^2 + \|v\|_{L^2}^2)&=2\left|\Re \int
      |x|^{-1}(u_E \bar u) u_E v\right|\\
    & \leq 2 \abs{\int |x|^{-1}(u_E \bar u) u_E v }
  \end{split}
\end{equation}
By Hardy-Littlewood-Sobolev,
\begin{equation}
  \abs{\int |x|^{-1}(u_E \bar u) u_E v} \leq C_{\rm HLS} \|u_E u\|_{L^{6/5}}\|u_E v\|_{L^{6/5}}
\end{equation}
By H\"older,
\begin{equation}
  \|u_E u\|_{L^{6/5}}\leq \|u_E\|_{L^3} \|u\|_{L^2} 
\end{equation}
Therefore,
\begin{equation}
  |\sigma|(\|u\|_{L^2}^2 + \|v\|_{L^2}^2)\leq 2 C_{\rm HLS}
  \|u_E\|_{L^3}^2 \|u\|_{L^2} \|v\|_{L^2}\leq C_{\rm HLS}\|u_E\|_{L^3}^2 (\|u\|_{L^2}^2 + \|v\|_{L^2}^2)
\end{equation}
Which gives our first bound:
\begin{equation}
  |\sigma| \leq C_{\rm HLS}\|u_E\|_{L^3}^2
\end{equation}
We would like to have that as $E$ tends to the bifurcation value,
$\|u_E\|_{L^3}$ tends to zero.  By H\"older again, and
Gagliardo-Nirenberg,
\begin{equation}
  \|u_E\|_{L^3}\leq \sqrt{\|u_E\|_{L^6} \|u_E\|_{L^2}} \leq \sqrt{C_{\rm
      GN}\|\nabla u_E\|_{L^2}   \|u_E\|_{L^2} }
\end{equation}
Therefore,
\begin{equation}
  |\sigma| \leq C_{\rm HLS}C_{\rm GN} \|\nabla u_E\|_{L^2}  \|u_E\|_{L^2}
\end{equation}

A bit more refinement can be done.  To get further control in terms of
$L^2$ norm alone, one can modify the energy-momentum tensor tensor
techniques as applied in \cite{tod2001ground} and \cite[Appendix
A]{harrison2002numerical}, to prove
\begin{equation}
  \label{KEbd}
  \int | \nabla u_E|^2 dx =  \frac13 E \int |u_E|^2 dx - \frac13 \int V |u_E|^2 + \frac23 \int x\cdot \nabla V |u_E|^2 dx.
\end{equation}
As a result, we have
\begin{equation}
  |\sigma| \leq C_{\rm HLS}C_{\rm GN}  \|u_E\|_{L^2}^2
  \sqrt{\frac{1}{3}E + \frac{1}{3}\|V\|_{L^\infty} +
    \frac{2}{3}\|x\nabla V\|_{L^\infty} }
\end{equation}
Thus we obtain a bound entirely in terms of $E$ and $\|u_E\|_{L^2}$.

%

To prove estimate \eqref{KEbd}, recall the energy-momentum tensor as
applied in \cite{tod2001ground},
\begin{align*}
  T_{ij} &= \partial_i u_E \partial_j (\bar{u}_E) + \partial_j u_E \partial_i (\bar{u}_E) + \partial_i (|x|^{-1} *  |u_E|^2)  \partial_j ( |x|^{-1} *  |u_E|^2) \\
         & - \delta_{ij} \left(  \sum_{k=1}^3  (  \partial_k u_E \partial_k (\bar{u}_E)  + \frac12 \partial_k (|x|^{-1} *  |u_E|^2)  \partial_k ( |x|^{-1} *  |u_E|^2)  )   \right. \\
         & \left.  +  \int V |u_E|^2 dx -  ( |x|^{-1} *  |u_E|^2) |u_E|^2 + E |u_E|^2 \right).  
\end{align*}
This is identical up to the stress-energy tensor from \cite[Appendix
A]{harrison2002numerical} modulo terms with $V$.  We claim that for
$i=1,2,3$,
\[
  T_{ij,j} = \sum_{j=1}^3 \partial_j (T_{ij}) = -(\partial_i V).
  |u_E|^2
\]
To see this, observe that
\begin{align*}
  & \frac12 \partial_1 ( | \partial_1 u_E|^2 - |\partial_2 u_E|^2 - |\partial_3 u_E|^2)+ \partial_2 (\partial_1 u_E \partial_2 \bar{u}_E ) + \partial_3 ( \partial_1 u_E \partial_3 \bar{u}_E)  \\
  & \hspace{2cm} = (\Delta \bar{u}_E )  \partial_1 u_E .
\end{align*}
For convenience, let us set $ \phi = (x^{-1} \ast |u_E|^2)$, the we
have
\begin{align*}
  T_{1j,j} & =(\Delta \bar{u}_E )  \partial_1 u_E  + (\Delta u_E )\partial_1 \bar{u}_E + ( \partial_1 \phi) \Delta \phi -  \partial_1 ( V |u_E|^2 - \phi |u_E|^2 + E |u_E|^2  ) \\
           &  =(V \bar{u}_E - \phi \bar{u}_E + E \bar{u}_E )  \partial_1 u_E  + (V u_E - \phi u_E + E u_E )\partial_1 \bar{u}_E - ( \partial_1 \phi) |u_E|^2 \\
           & -  \partial_1 ( V |u_E|^2-\phi |u_E|^2 + E |u_E|^2  ) \\
           & =- (\partial_1 V) |u_E|^2,
\end{align*} 
where we have used that $-\Delta \phi = |u_E|^2$.  A similar
calculation for $i = 2,3$.  Then,
\begin{equation*}
  \sum_{i=1}^3  \sum_{j= 1}^3 \partial_i ( T_{ij} x_j)  = \sum_{i =1}^3 T_{ii} - ( x\cdot \nabla V) |u_E|^2.
\end{equation*}
This implies
\begin{equation*}
  \int  \sum_{i =1}^3 T_{ii} dx - \int (x \cdot \nabla V) |u_E|^2 dx = 0,
\end{equation*}
and hence
\begin{align*}
  & 0 = \int \big(  - | \nabla u_E |^2 - \frac12 | \nabla  \phi |^2 + 3  \phi |u_E|^2 - 3 V |u_E|^2  \\
  & \hspace{2cm}   - 3 E |u_E|^2 - x\cdot \nabla V |u_E|^2 \big) dx  .\notag
\end{align*}
Recognizing that
\[
  \int | \nabla \phi |^2 dx = - \int \Delta \phi \phi = - \int \phi
  |u_E|^2 dx
\]
and using that
\[
  -E \int |u_E|^2 dx = \int | \nabla u_E|^2 dx + \int V |u_E|^2 dx -
  \int \phi |u_E|^2 dx,
\]
we arrive at \eqref{KEbd}.


\bibliographystyle{plain}

\bibliography{schpoisson}

\end{document}